\numberwithin{equation}{section}
\theoremstyle{plain}
\newtheorem{theorem}{Theorem}[section]
\newtheorem{Prop}[theorem]{Proposition}
\newtheorem{Le}[theorem]{Lemma}
\newtheorem{Cor}[theorem]{Corollary}
\theoremstyle{remark}
\theoremstyle{definition}
\newcommand{\R}{\mathbb R}
\def\ga{\gamma}
\def\ra{\rightarrow}
\def\e{\emph}
\def\i{\infty}
\def\p{\partial}
\def\b{\begin}
\begin{document}

\title{%\flushleft
{Quasisymmetric  Maps on the Boundary of a Negatively Curved
Solvable Lie Group}}
\author{Xiangdong Xie}
\date{  }

\maketitle

\begin{abstract}
We describe all the self quasisymmetric maps on the  ideal
    boundary
 of a particular  negatively curved solvable Lie group. As applications, we
 prove a Liouville type theorem,  and
 derive some rigidity properties
   for quasiisometries of the solvable Lie group.

\end{abstract}

{\bf{Keywords.}} quasiisometry, quasisymmetric map, negatively
curved solvable Lie groups.

%\keywords    test  \endkeywords
%\subjclass  51  \endsubjclass

%\tableofcontents

%\vspace{3mm} \noindent
 {\small {\bf{Mathematics Subject
Classification (2000).}} 20F65,  30C65, 53C20.

%\noindent
%{\bf{Mathematics Subject Classification(2000).}} Primary 53C23, 51F99, 57M20;
% Secondary  53C70, 57M60.
% \newline
%{\bf{Keywords.}} Tits boundary, Tits metric, CAT(0), 2-complex, geodesic.

%\keywords    test  \endkeywords
%\subjclass  51  \endsubjclass

%\tableofcontents

%\vspace{3mm} \noindent {\small {\bf{Mathematics Subject
%Classification (2000).}} 54F45, 30C65.
%20F65. %57M20, 20F67, 20E07.}

%30C65  (1991-now) Quasiconformal mappings in ${\bf R}^n$, other generalizations
%20F34 Fundamental groups and their automorphisms
%20F65 Geometric group theory
%20F67 Hyperbolic groups and nonpositively curved groups
%20F99 None of the above, but in this section
%20E07 Subgroup theorems; subgroup growth
%53C20 Global Riemannian geometry, including pinching
%53C23 Global topological methods
%54F45 Dimension theory
%57M20 Two-dimensional complexes
%57M60 Group actions in low dimensions
%(1) (2) (a) (b) (i) (ii)
%20F69 Asymptotic properties of groups

%$\{x_i\}_{i=1}^\i$  converges to $\xi\in \ol{X}$
% an index two subgroup

               %\vspace{3mm} \noindent {\small {\bf{Key words.}} }

%hyperbolic element, parabolic element,
%quasi-convex.}

\setcounter{section}{0} \setcounter{subsection}{0}

\section{Introduction}\label{s0}

In this paper we study quasisymmetric maps on the ideal boundary of
a particular  negatively curved solvable Lie group.

Let
\[A=\left(\begin{array}{cc} 1 & 1\\ 0 & 1\end{array}\right).
\]
 Let $\R$ act on $\R^2$ by
             $(t, v)\rightarrow e^{tA} v$ ($t\in \R$, $v\in \R^2$).
             We denote the corresponding semi-direct product by
             $G_A=\R^2\rtimes_A \R$. That is,  $G_A=\R^2\times\R$
             as a   smooth manifold, and the group operation is
             given by:

             $$(v,t)\cdot (w, s)=(v+e^{tA}w, t+s)$$
               for all $(v,t), (w, s)\in\R^2\times\R$.
               The group $G_A$  is a
simply connected  solvable Lie group.  % and is the subject of study
%in this paper.

We endow $G_A$ with the left invariant
   Riemannian
 metric determined by taking
the standard Euclidean metric at the identity of
$G_A=\R^2\times\R=\R^{3}$.
%\begin{equation}\label{eq:Metric}
 % d((x,t),(x_0,t_0))=\Vert (e^{-tA}(x_0-x), t_0-t)\Vert
  %                  =\sqrt{\Vert e^{-tA}(x_0-x)\Vert^2+|t-t_0|^2}.
%\end{equation}
With this  metric  $G_A$ has  pinched negative sectional curvature
(and so is Gromov hyperbolic). Hence $G_A$ has a well defined ideal
boundary $\p G_A$. There is a so-called cone topology on
$\overline{G_A}=G_A\cup \p G_A$, in which $\p G_A$ is homeomorphic
to the $2$-dimensional sphere and $\overline{G_A}$ is homeomorphic
to the closed $3$-ball in the Euclidean space. For each $v\in \R^2$,
the map $\gamma_v: \R\ra G_A$, $\gamma_v(t)=(v,t)$  is a geodesic.
We call such a geodesic a vertical geodesic.  It can be checked that
all vertical geodesics are asymptotic as $t\ra +\infty$. Hence they
define a point $\xi_0$ in the ideal boundary $\p G_A$.

Each geodesic ray in $G_A$ is  asymptotic to either  an upward
oriented vertical geodesic or a downward oriented vertical geodesic.
The upward oriented geodesics are asymptotic to $\xi_0$ and the
downward oriented vertical  geodesics are in 1-to-1 correspondence
with $\R^2$. Hence $\p G_A\backslash\{\xi_0\}$ can be naturally
identified with $\R^2$.

For any proper Gromov hyperbolic geodesic space $X$ and any $\xi\in
\p X$, there are so-called parabolic visual (quasi)metrics
 on $\p X\backslash \{\xi\}$. See \cite{SX}, Section 5.  In our case, a   parabolic
 visual quasimetric   $D$ on $\p G_A\backslash \{\xi_0\}$  is given by:
$$D((x_1, y_1), (x_2, y_2))=\max\big\{\big\vert y_2-y_1\big\vert, \big\vert(x_2-x_1)-(y_2-y_1)\ln
     |y_2-y_1|\big\vert\big\}$$
       for all $(x_1,  y_1), (x_2, y_2)\in \R^2=\p G_A\backslash
       \{\xi_0\}$, where
                 $0\ln 0$ is understood  to be $0$.

We remark that $D$ is not a metric on $\R^2$, but merely a
quasimetric. Recall that  a quasimetric $\rho$  on a set $A$
  is a function $\rho:  A\times A\ra \R$ satisfying the following
  three conditions:   (1)  $\rho(x,y)=\rho(y,x)$ for all $x, y\in
  A$;
    (2)  $\rho(x, y)\ge 0$  for all $x,y\in A$  and
    $\rho(x, y)=0$   if  and only  if  $x=y$;
       (3)   there is some $M\ge 1$ such that $\rho(x, z)\le M (\rho(x,
y)+\rho(y,z))$ for all $x, y, z\in A$. For each   $M\ge 1$,
    %quasimetric $\rho$,
there is a constant $\epsilon_0>0$ such that $\rho^\epsilon$ is
biLipschitz
   equivalent to
    a metric
for all    quasimetric $\rho$  with constant $M$  and   all
 $0<\epsilon\le \epsilon_0$,
     see Proposition 14.5. in \cite{Hn}.

Let $\eta: [0,\i)\ra [0,\i)$ be a homeomorphism.   A bijection
%A homeomorphism between metric spaces
$F:X\to Y$ between two quasimetric spaces is
\e{$\eta$-quasisymmetric} if for all distinct triples $x,y,z\in X$,
we have
\[
   \frac{d(F(x), F(y))}{d(F(x), F(z))}\le \eta\left(\frac{d(x,y)}{d(x,z)}\right).
\]
    A map $F:  X\to Y$ is quasisymmetric if it is $\eta$-quasisymmetric
for some $\eta$.

 % Hence it makes sense to define quasisymmetric maps between
  %quasimetric spaces.

The following is the main result of the paper.

\b{theorem}\label{intromain}
 {Every quasisymmetric map $F: (\R^2, D)\ra (\R^2, D)$  is a
 biLipschitz map.
   Furthermore, a bijection $F:(\R^2, D)\ra (\R^2, D)$
   is a quasisymmetric map   if  and only of it
has the following form: $F(x,y)=(ax+c(y), ay+b)$
  for all $(x,y)\in \R^2$, where $a\not=0$,  $b$ are constants and
  $c: \R\ra \R$ is a Lipschitz map.}
\end{theorem}

One should compare this with quasiconformal maps on the sphere or
the Euclidean space, where there are plenty of non-biLipschitz
quasiconformal  maps. On the other hand, the conclusion of
Theorem~\ref{intromain} is not as strong as in the cases of
quarternionic
  hyperbolic spaces,
Cayley plane (\cite{P2}) and Fuchsian buildings (\cite{BP},
\cite{X}), where every quasisymmetric map of the ideal boundary is
actually a conformal map. In our case, there are many  non-conformal
quasisymmetric maps of the ideal boundary of $G_A$.

As applications, we describe all the isometries and all the
similarities of $(\R^2, D)$, see Proposition \ref{iso.and}.
     We also prove a
Liouville type theorem for $(\R^2, D)$.

\b{theorem}\label{introliouville}
  {Every conformal map  $f: (\R^2, D)\ra (\R^2, D)$ is the boundary
  map of an isometry $G_A\ra G_A$.}
  \end{theorem}

  Theorem \ref{intromain} also has  geometric consequences.
Let $L\ge 1$ and $C\ge 0$. A (not necessarily continuous ) map
$f:X\ra Y$ between two metric spaces is an $(L,A)$-\e{quasiisometry}
if:
\newline (1) $d(x_1,x_2)/L-C\le d(f(x_1), f(x_2))\le L\, d(x_1, x_2)+C$
for all $x_1,x_2\in X$;
\newline (2)  for any $y\in Y$, there is some $x\in X$ with
$d(f(x), y)\le C$.
\newline
%In the case $L_1=L_2$,  we say $f$ is  a  quasisimilarity.
In the case   $L=1$, we call  $f$  an \e{almost isometry}.

  \begin{Cor}\label{c0}
%Let $G_A$ be as above.
Every self quasiisometry of $G_A$ is an almost isometry.
\end{Cor}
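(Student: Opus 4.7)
The plan is to pass to the boundary via the standard correspondence between quasiisometries of proper Gromov hyperbolic spaces and quasisymmetric homeomorphisms of their ideal boundaries, invoke Theorem~\ref{intromain}, and then translate the resulting biLipschitz boundary map back into an almost isometry statement on $G_A$.

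First, any self quasiisometry $f$ of $G_A$ extends to a boundary homeomorphism $\partial f \colon \partial G_A \to \partial G_A$ that is quasisymmetric with respect to any visual metric; this is standard for proper Gromov hyperbolic spaces. The crucial preliminary step is to verify that $\partial f$ fixes the distinguished point $\xi_0$. One expects this to follow from a quasiisometry-invariant characterization of $\xi_0$: for instance, $\xi_0$ is the unique point in $\partial G_A$ whose horospheres are flat (isometric to Euclidean $\R^2$), while horospheres based at other boundary points inherit an exponentially distorted geometry. Such rigidity is standard for Heintze groups and should be available either from the existing literature or by a short direct argument.

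Once $\partial f(\xi_0)=\xi_0$, the restriction becomes a self quasisymmetric map of $\partial G_A\setminus\{\xi_0\}=(\R^2,D)$. By Theorem~\ref{intromain}, it is biLipschitz of the form $F(x,y)=(ax+c(y),ay+b)$. The scaling factor $a$ is realized on $G_A$ by a vertical translation, which is an isometry for the left invariant metric; after composing $f$ with the inverse of such a translation, we may assume $a=1$, so that the induced boundary map is $1$-biLipschitz with respect to $D$. It then remains to show that a $1$-biLipschitz self map of $(\R^2,D)$ of the form $F(x,y)=(x+c(y),y+b)$ is the boundary extension of an almost isometry of $G_A$.

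The main obstacle is this last implication: one must quantitatively match the multiplicative Lipschitz constant of the boundary map in the parabolic visual quasimetric $D$ to the multiplicative quasiisometry constant $L$ of $f$. Using the explicit formula for $D$ together with the Busemann function at $\xi_0$, one should argue that boundary distances control distances in $G_A$ up to additive errors (absorbed by the Lipschitz function $c(y)$ and an additive constant $C$), so that $L$ can be taken equal to $1$. Verifying that $\partial f$ fixes $\xi_0$ is the other key structural input; it is expected to be standard in this context, but still needs to be made explicit.
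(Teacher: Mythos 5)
Your overall strategy---pass to the boundary, invoke Theorem~\ref{intromain}, and realize the resulting biLipschitz map by an explicit almost isometry---is the same as the paper's, which deduces Corollary~\ref{c0} from Theorem~\ref{qasiisog} by exhibiting the groups $H_1'$, $H_2'$ of isometries and $H_3'$ of almost isometries whose boundary maps exhaust $QS(\R^2,D)$. But there is a genuine gap at the step you label as ``expected to be standard'': the claim that $\partial f$ fixes $\xi_0$. This is emphatically not standard for Heintze-type groups $N\rtimes\R$; whether all quasiisometries of such a group are height-respecting (equivalently, whether their boundary maps fix the special point) is in general a hard problem, and proving it for this particular $G_A$ is precisely the content of Section~3 of the paper (Proposition~\ref{p3.4}), which rests on Kleiner's $Q$-variation argument (Lemmas~\ref{l3.1}--\ref{l3.4}) showing that quasisymmetric maps must preserve the horizontal foliation, and then rules out $F(\xi_0)\neq\xi_0$ because a dense horizontal subset of a topological circle in $\R^2$ cannot exist. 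Your proposed substitute---that $\xi_0$ is characterized by having flat horospheres---does not work: flatness of horospheres is not a quasiisometry invariant (a quasiisometry only carries horospheres to sets at bounded Hausdorff distance from horospheres, destroying any curvature information), and a quasisymmetric boundary homeomorphism does not directly see horosphere geometry at all. Without Proposition~\ref{p3.4}, Theorem~\ref{intromain} cannot even be applied, since the restriction of $\partial f$ to $\partial G_A\setminus\{\xi_0\}$ need not be a self map of $(\R^2,D)$.

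A secondary soft spot is the final implication. Rather than trying to ``quantitatively match'' the boundary Lipschitz constant to the quasiisometry constant of $f$ (which is delicate, since $f$ is only given up to additive errors), the paper's route is cleaner: it writes down explicit maps $f_{C,b}((x,y),t)=((x+C(y),y+b),t)$, checks via Lemma~6.3 of \cite{SX} that each is an almost isometry whose boundary map is $F_{C,b}$, and then uses the standard fact that a quasiisometry of a Gromov hyperbolic space whose boundary map is the identity is at bounded distance from the identity. Hence $f$ is at bounded distance from an element of $QI'(G_A)$, and a map at bounded distance from a $(1,C)$-quasiisometry is itself a $(1,C')$-quasiisometry. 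If you adopt that route, your step of normalizing $a$ by composing with $\lambda_t$ (and with $\pi$ when $a<0$) is fine, but the $\xi_0$-fixing step must still be supplied by the variation argument.
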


Notice that an almost isometry is not necessarily a finite distance
away from an isometry.

\noindent {\bf{Acknowledgment}}. {I would like to thank Bruce
Kleiner for  stimulating  discussions.  I thank him for  allowing
    me to include his proof of the fact that quasisymmetric maps
preserve the horizontal foliation (Section 3).
 I also
would like to thank the Department of Mathematical Sciences at
Georgia Southern University for generous travel support.}

\section{Quasimetrics on the ideal boundary}\label{metriconb}

In this  section,  we  will define three different  parabolic visual
  quasimetrics on the
 ideal boundary, and
 find an explicit  formula for
one of them.  The three quasimetrics are biLipschitz equivalent with
each other.

Let $A$ and $G_A$
  be as in the Introduction.
We endow $G_A$ with the left invariant metric determined by taking
the standard Euclidean metric at the identity of
$G_A\approx\R^2\times\R=\R^3$.
%\begin{equation}\label{eq:Metric}
 % d((x,t),(x_0,t_0))=\Vert (e^{-tA}(x_0-x), t_0-t)\Vert
  %                  =\sqrt{\Vert e^{-tA}(x_0-x)\Vert^2+|t-t_0|^2}.
%\end{equation}
At a point $(x,t)\in\R^2\times\R\approx G_A$, the tangent space is
identified with $\R^2\times\R$, and  the Riemannian metric   is
given by the symmetric matrix
  \[\left(\begin{array}{cc} Q_A(t) & 0\\ 0 & 1\end{array}
    \right),
  \]
where $Q_A(t)=e^{-tA^T}e^{-tA}$.  Here $A^T$ denotes the transpose
of $A$.  With this metric $G_A$ has sectional curvature
$-(6+\sqrt{29})/4=-b^2\le K\le -a^2=-(6-\sqrt{29})/4$. Hence $G_A$
has a well defined ideal boundary $\p G_A$. All vertical geodesics
$\gamma_v$ ($v\in \R^2$) are asymptotic as $t\ra +\infty$. Hence
they define a point $\xi_0$ in the ideal boundary $\p G_A$.

    The sets $\R^2\times\{t\}$  ($t\in \R$)
are horospheres centered at $\xi_0$.
      For each  $t\in \R$, the
induced metric on  the horosphere $ \R^2\times\{t\}\subset G_A$   is
determined by the quadratic form $Q_A(t)$. This metric has distance
formula $d_{\R^2\times \{t\}}((v,t), (w,t))=| e^{-tA}(v-w)|$.  Here
$|\cdot |$ denotes the Euclidean norm.

Each geodesic ray in $G_A$ is  asymptotic to either  an upward
oriented vertical geodesic or a downward oriented vertical geodesic.
The upward oriented geodesics are asymptotic to $\xi_0$ and the
downward oriented vertical  geodesics are in 1-to-1 correspondence
with $\R^2$. Hence $\p G_A\backslash\{\xi_0\}$ can be naturally
identified with $\R^2$.

We  next define three  parabolic visual quasimetrics on
  $\p G_A\backslash\{\xi_0\}=\R^2$.
 Given $v,
w\in\R^2\approx \p G_A\backslash\{\xi_0\}$, the parabolic visual
quasimetric $D_e(v,w)$ is defined as follows: ${D}_e(v,w)=e^t$,
where $t$ is the unique real number such that at height $t$ the two
vertical geodesics $\ga_v$ and $\ga_w$ are at distance one apart in
the horosphere; that is, $d_{\R^n\times \{t\}}((v,t), (w,t))=|
e^{-tA}(v-w)|=1.$ Here the subscript {\it{e}} in $D_e$  means it
corresponds to the Euclidean norm.

  Recall that the super norm
   on $\R^2$
  is given by:
 $|(x,y)|_s=\max\{|x|, |y|\}$  for all $(x,y)\in \R^2$.
   The  parabolic visual quasimetric  $D_s$ on $\p G_A \backslash
\{\xi_0\}$ is defined as follows: ${D}_s(v,w)=e^t$, where $t$ is the
smallest  real number such that at height $t$ the two vertical
geodesics $\ga_v$ and $\ga_w$ are at distance one apart with respect
to the norm $|\cdot|_s$; that is, $| e^{-tA}(v-w)|_s=1.$   Here the
subscript {\it{s}} in  $D_s$  means it corresponds to the super
  norm $|\cdot|_s$.

Notice that $|v|_s\le |v|\le \sqrt{2}   \, |v|_s$  for all $v\in
\R^2$.
 Using this, one  can  verify the following  lemma,
 whose proof is left to the reader.

 \b{Le}\label{norms}
 {For all $v, w\in \R^2$ we have
 $D_s(v,w)\le D_e(v,w)\le 2^{1/{2a}} D_s(v,w)$,  where
   $a=\sqrt{6-\sqrt{29}}/2$.}
\end{Le}

 The following result provides a parabolic visual quasimetric   $D$
     which
 admits an explicit formula and is also biLipschitz equivalent with
  $D_e$ and $D_s$.

\begin{Prop}\label{metric}
For all $v=(x_1, y_1),  w=(x_2,  y_2)\in \R^2$,
   $$D(v, w)/3\le  D_s(v,w)\le 3 D(v, w),$$
     where $D(v, w)=\max\big\{\big\vert y_2-y_1\big\vert, \big\vert(x_2-x_1)-(y_2-y_1)\ln
     |y_2-y_1|\big\vert\big\}$  and      $0\ln 0$ is understood  to be $0$.

\end{Prop}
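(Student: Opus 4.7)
The plan is to make the defining equation of $D_s$ completely explicit and then read off the comparison with $D$.

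First I would compute $e^{-tA}$. Since $A = I + N$ with $N = \bigl(\begin{smallmatrix} 0 & 1 \\ 0 & 0\end{smallmatrix}\bigr)$ nilpotent and $N^2=0$, one has $e^{-tA} = e^{-t}\bigl(\begin{smallmatrix} 1 & -t \\ 0 & 1\end{smallmatrix}\bigr)$. Writing $(x,y) = v - w$ and $s = e^t$, the defining equation $|e^{-tA}(v-w)|_s = 1$ becomes
\[
   \max\{\,|x - y\ln s|,\; |y|\,\} = s.
\]
A short piecewise analysis of $t \mapsto e^{-t}\max\{|x - ty|, |y|\}$ (splitting at the kinks $t = x/y \pm 1$ when $y \ne 0$) shows this function is strictly decreasing in $t$, so the equation has a unique solution $s_* = D_s(v,w)$.

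Next I would case-split on the defining maximum for $D$. Set $c := x - y\ln|y|$ so that $D = \max\{|y|, |c|\}$. If $|y| \ge |c|$, then $s = |y|$ satisfies the equation directly and by uniqueness $s_* = |y| = D$, giving the bound with constant $1$. Otherwise $|y| < |c|$, forcing $s_* > |y|$; write $s_* = |y|e^{\delta}$ with $\delta > 0$ and set $r = |y|/|c| \in (0,1)$, $P = s_*/D = re^{\delta}$, $\epsilon = \operatorname{sgn}(cy) \in \{\pm 1\}$. The equation reduces cleanly to
\[
    |\,1 - \epsilon r\delta\,| = P.
\]

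For $\epsilon = -1$, the inside is positive and one gets $r(e^{\delta}-\delta)=1$, hence $P = 1/(1-\delta e^{-\delta})$. For $\epsilon = 1$, the alternative $r\delta > 1$ has no solution (it would force $r(\delta-e^{\delta})=1$ while $\delta<e^{\delta}$), so $r(e^{\delta}+\delta)=1$ and $P = 1/(1+\delta e^{-\delta})$. Both answers depend only on $\delta e^{-\delta}$, whose maximum on $[0,\infty)$ is $1/e$ at $\delta=1$. This yields the sharp bounds
\[
    \frac{e}{e+1} \le P \le \frac{e}{e-1},
\]
and since $[e/(e+1),\,e/(e-1)] \subset [1/3,\,3]$, the proposition follows.

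The main obstacle is just the bookkeeping: establishing strict monotonicity of the one-parameter family $t\mapsto|e^{-tA}(v-w)|_s$ so that $s_*$ is genuinely singled out by the equation, and corralling the two sign cases in Case~B into the single one-variable calculation $\max_{\delta\ge 0}\delta e^{-\delta}=1/e$. Everything else is algebraic substitution.
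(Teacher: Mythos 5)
Your proposal is correct and follows essentially the same route as the paper: reduce to the difference vector, note that $t\mapsto e^{-t}\max\{|x-ty|,|y|\}$ is strictly decreasing so the defining equation for $D_s$ has a unique solution, and case-split on which term realizes $D$; your reformulation $P=1/(1\mp\delta e^{-\delta})$ together with $\max_{\delta\ge 0}\delta e^{-\delta}=1/e$ packages the paper's two ad hoc contradiction estimates (its Cases 3 and 4, where it solves $e^u=a-u$ and $e^u=u-a$) into a single calculus fact and even yields the sharper constants $e/(e+1)$ and $e/(e-1)$ in place of $1/3$ and $3$. The only loose end is the subcase $y=0$, $x\ne 0$, which lands in your Case B but has $r=0$ outside your parametrization; there the equation reads $e^{-t}|x|=1$ directly, giving $D_s=|x|=D$, so nothing is lost.
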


Let $g=((x,y), t)\in \R^2\times \R$ and denote by $L_g:
G_A\rightarrow G_A$  the left translation by $g$.  We calculate
\[L_g((x',y'), t')=((x+e^t(x'+ty'), y+e^ty'), t'+t).\]
  We  see that  $L_g$ maps  vertical geodesics   to  vertical  geodesics.
   It follows that $L_g$ induces a map   $T_g: \R^2\rightarrow \R^2$,
     $$T_g(x', y')=(x+e^t(x'+ty'), y+e^ty').$$
      Since $L_g$ is an isometry of $G_A$ and it translates by $t$
      in the vertical direction, the definition of
        the quasimetric $ D_e$ shows that
        $$ D_e(T_g(x_1,y_1), T_g(x_2,y_2))=e^t
         D_e((x_1,y_1),(x_2,y_2))$$
          for all $(x_1,y_1), (x_2, y_2)\in \R^2$.
         In other words,
         $T_g$ is a similarity of $(\R^2, D_e)$ with similarity constant
         $e^t$.
            When $t=0$, $T_g$ is simply a Euclidean translation  and
            it is an isometry with respect to
             $ D_e$. Similar statements also hold for the
             quasimetric $D_s$.

             Notice that Euclidean translations are also isometries with respect to the function
             $D$.
               This together with the same statement about $ D_s$
               implies that we can assume $(x_1, y_1)=(0,0)$ in
               order to prove
               Proposition \ref{metric}.

\begin{proof}[\bf{Proof of   Proposition~\ref{metric}.}]
 By the preceding remark, we may assume $(x_1, y_1)=(0,0)$, and
write $(x,y)$ for $(x_2, y_2)$.
  Recall $ D_s((x,y), (0,0))=e^t$ if
 $t$ is the smallest real  number such that
  $|e^{-tA}(x,y)|_s=1.$
We calculate  $| e^{-tA}(x,y)|_s=\max\{e^{-t}|x-ty|, e^{-t}|y|\}$.
 We consider several cases:

Case 1: $y=0$. In this case, $| e^{-tA}(x,0)|_s=e^{-t}|x|$. Hence
 $ D_s((x,0),(0,0))=e^t=|x|=D((x,0),(0,0))$.

When $y\not=0$,    we let $t_0=\ln|y|$  and $a=x/y-\ln|y|$.

 Case 2:  $y\not=0$  and  $|x-t_0 y|\le |y|$.
  In this case,
  $|e^{-t_0A}(x,y)|_s=\max\{e^{-t_0}|x-t_0y|, e^{-t_0}|y|\}=e^{-t_0}|y|=1$.
    Notice also $| e^{-tA}(x,y)|_s\ge e^{-t}|y|>1$ if $t<t_0$.
    Hence $ D_s((x,y),(0,0))=e^{t_0}=|y|=D((x,y),(0,0))$.

When  $y\not=0$  and  $|x-t_0 y|> |y|$, we have $|a|>1$.

Case 3:    $y\not=0$,    $|x-t_0 y|> |y|$  and $a>1$. In this case,
   $D((x,y),  (0,0))=|x-y\ln |y||$.
  Let
  $t_1>t_0$ be the smallest real number $t$ satisfying
 $e^{-t}|x-ty|=1$.
  Notice that $e^{-t_1}|y|<1$  and so
   $ D_s((x,y),(0,0))=e^{t_1}$.
  Set $u=t_1-t_0>0$.  The equality  $e^{-t_1}|x-t_1y|=1$ implies
  $e^u=a-u$.   Clearly $e^u=a-u\le a$.
   We claim $e^u=a-u\ge a/3$.  Otherwise,
  $u>2a/3$. This contradicts $a=u+e^u>u+(1+u)$.
    Hence $a/3\le e^u\le a$
      and
      $$|x-y\ln |y| |/3=a |y|/3\le |y| e^u=e^{t_1}= D_s((x,y),(0,0))\le a|y|= |x-y\ln
      |y||.$$

      Case 4:   $y\not=0$,    $|x-t_0 y|> |y|$  and $a<-1$. In this case,
  $D((x,y),  (0,0))=|x-y\ln
      |y||$.
     Let $t_1>t_0$ be the smallest real number $t$ satisfying
 $e^{-t}|x-ty|=1$.
   %Notice that $e^{-t_1}|y|<1$ and $e^{-t_1}|x-t_1y|=1$, and hence
   Again we have $ D_s((x,y),(0,0))=e^{t_1}$.
  Set $u=t_1-t_0>0$.  The equality  $e^{-t_1}|x-t_1y|=1$ implies
  $e^u=u-a$.   Clearly $e^u=u-a>-a$.
We claim $e^u=u-a\le -3a$.
 Otherwise,  $u>-2a$  and hence  $-a=e^u-u>1+u^2/2>u>-2a$, a
 contradiction.
    Hence $|a|=-a\le e^u\le-3a=3|a|$
  and
    $$|x-y\ln |y||= |ay|\le |y| e^u=e^{t_1}=D_s((x,y),(0,0))\le 3|ay|=  3|x-y\ln
      |y||.$$

\end{proof}

We  describe  some isometries and similarities of the space $(\R^2,
D)$. The following proposition can be easily proved by using the
formula for $D$.

\b{Prop}\label{iso.and.simi}
  Let $(\R^2, D)$ be as above.\newline
  (1) Then Euclidean translations of $\R^2$ are isometries with
  respect to $D$;\newline
  (2)   Let $\pi: \R^2 \ra \R^2$ be defined by $\pi(x,y)=(-x, -y)$.
   Then $\pi$ is an isometry  with
  respect to $D$;  \newline
  (3) For any  real   number $t$, let $\lambda_t: \R^2\ra \R^2$ be
  defined by $\lambda_t(x,y)=(e^t(x+ty), e^ty)$.  Then
    $D(\lambda_t(x_1, y_1),  \lambda_t(x_2, y_2))=e^t\cdot D((x_1, y_1),
    (x_2, y_2))$  for all $(x_1, y_1), (x_2, y_2)\in \R^2$.

    \end{Prop}

We notice that the three classes of maps in Proposition
\ref{iso.and.simi}  are boundary maps of isometries of $G_A$.
  Euclidean translations of $\R^2=\p G_A\backslash \{\xi_0\}$
    are boundary maps of left translations   $L_g$  of $G_A$
   for elements $g$ of type $g=((x,y), 0)\in \R^2\times \R=G_A$.
      The map $\lambda_t$ is the boundary map of left translation
       $L_g$ for $g=((0,0), t)$. Finally, $\tau$ is the boundary
       map of the  automorphism   $\tau':G_A\ra G_A$,
       $\tau'((x,y),t)=((-x, -y), t)$.  Notice that $\tau'$ is indeed
       an automorphism of $G_A$ and the tangential map of $\tau'$ at
       the identity is  an isometry. It follows that $\tau'$ is an
       isometry of $G_A$.

\section{Quasisymmetric maps preserve horizontal foliation}\label{hori}

In this section we prove that every self quasisymmetric map  of
$(\R^2, D)$   maps  horizontal    lines to horizontal lines.
 The proof belongs to Bruce Kleiner.  Here I am trying to provide
 more details and I am responsible for the inaccuracies that might
 result from this.
 I would like to
  express my gratitude towards Bruce for allowing me to include
   his argument.

\b{Def}\label{quasiball} {Let $(X, \rho)$ be a quasimetric space and
$L\ge 1$.
 A subset $A\subset X$ is  called an $L$-\e{quasi-ball} if there is
 some $x\in X$ and some $r>0$ such that $B(x,r)\subset A\subset B(x,
 Lr)$.    Here $B(x,r)=\{y\in X: \rho(y,x)<r\} $.}   % denotes the open ball  with center $x$ and radius $r$.}

 \end{Def}

   The following notion is key to the proof.

   \b{Def} (Kleiner) \label{varation}
   {Pick $Q\ge 1$. Let $u: X\ra \R$ be a function (not necessarily
   continuous) defined on a quasimetric space, and let ${\mathcal{P}}$
   be a  collection of subsets of $X$.  The \e{$Q$-variation of
   $u$  over ${\mathcal{P}}$} --  denoted  $V_Q(u,{\mathcal{P}})$ -- is the quantity
$$\Sigma_{P\in {\mathcal{P}}}[osc(u|_P)]^Q,$$
  where  $osc(u|_p)$  denotes  the oscillation (sup minus inf)  of
  the restriction of $u$  to the  subset  $P\subset X$. The
  \e{$Q$-variation   $V_Q(u)$   of $u$}  is $\sup\{V_Q(u,{\mathcal{P}})\}$ where
  ${\mathcal{P}}$  ranges over all disjoint collections   of balls
  in $X$.  The \e{$(Q,K)$-variation $V_{Q,K}(u)$   of $u$}  is
     $\sup\{V_Q(u, {\mathcal{P}})\}$ where ${\mathcal{P}}$
       ranges over  all disjoint  collections  of $K$-quasi-balls
       in $X$.}
       \end{Def}

There are useful variants of this definition, for instance one can
look at the infimum over all coverings. Or one can take the infimum
over all coverings followed by the sup as the mesh size tends to
zero.   The definition preforms the same function as Pansu's modulus
    \cite{P1}, but it seems  easier to digest.

\b{Le}\label{l3.1}
   {Let $F:  X\ra Y$ be an $\eta$-quasisymmetric map between two
   quasimetric spaces.   Then  for every function  $u:  X\ra \R$ we
   have
    $V_{Q, K}(u)\le V_{Q, \eta(K)}(u\circ F^{-1})$.

   }

   \end{Le}

   \b{proof}
For any subset $A\subset X$, the oscillation of $u$ on $A$ equals
the oscillation of $u\circ F^{-1}$  on $F(A)$.  Let ${\mathcal{P}}$
be a disjoint collection of $K$-quasi-balls in $X$. Then
 $F({\mathcal{P}})=\{F(P):  P\in {\mathcal{P}}\}$ is a disjoint
 collection of $\eta(K)$-quasi-balls in $Y$,  and
$V_Q(u,{\mathcal{P}})=V_Q(u\circ F^{-1},  F({\mathcal{P}}))$.
  Hence
  $$V_{Q,K}(u)=\sup_{\mathcal{P}}V_Q(u, {\mathcal{P}})
  =\sup_{\mathcal{P}}V_Q(u\circ F^{-1},  F({\mathcal{P}}))
  \le V_{Q,\eta(K)}(u\circ F^{-1}).$$

   \end{proof}

  By
   Proposition \ref{iso.and.simi}  and
  the discussion preceding the proof of
  Proposition~\ref{metric},
  for each $g\in G_A$, the map $T_g: \R^2\ra \R^2$
is  a  similarity    with respect to  the quasimetrics
      $D_e$,  $D_s$  and $D$.
        % (with sufficiently small $\epsilon>0$).
           Hence,  in particular,
the images of the unit square  $S$   under the action of  $G_A$   on
$\R^2$ are $K$-quasi-balls in these quasimetrics for some fixed $K$.
 %From now on we will  fix such a metric on $\R^2$ and a constant $K$
 %such that $T_g(S)$ are all $K$-quasi-balls in the metric.
In Lemmas  \ref{3.2} through \ref{l3.4}, $\R^2$ is equipped with one
of the three quasimetrics. % introduced in Section \ref{metriconb}.

\b{Le}\label{3.2} The coordinate function $y: \R^2\ra \R$  has
locally  finite $(2,L)$-variation
  for any $L$.
  \end{Le}

  \b{proof} Let $U\subset \R^2$ be any bounded open subset.
First observe that if two $L$-quasi-balls have comparable size, then
the oscillation of $y$ over the two quasi-balls will be comparable.
Hence when we calculate  the $2$-variation, it suffices to consider
only packings  of $U$  by quasi-balls of the form $T_g(S)$  where
$g\in G_A$. For each such square, we clearly have
$$[osc(y|_B)]^2=area(B)$$
 where $area(B)$  denotes the Euclidean area.  It follows that the
 $2$-variation of $y|_U$   is bounded by the area
 of $U$.

  \end{proof}

\b{Le}\label{l3.3}
   Let $U\subset \R^2$ be an open subset.
 If $u:  U\ra \R$   is a continuous function which
is  not constant along  some horizontal  line segment in $U$, then
$V_{2, K}(u)=\i$.

\end{Le}

\b{proof}
    Since $u$ is continuous and is not constant along a horizontal
    line segment in $U$,  after composing $u$ with an affine function,
      we may assume that  there is a rectangle $C=[a,b]\times [c,d]\subset
      U$ such that
 $u\le 0$ on $F_0:=\{(x,y)\in C: x=a\}$  and $u\ge 1$ on $F_1:=\{(x,y)\in C:  x=b\}$.
      Let
${\mathcal{G}}$  be the standard unit coordinate grid.  Pick $t\in
\R$, $t<<0$.   The image of ${\mathcal{G}}$ under
$$\lambda_t=\left[\begin{array}{cc} e^t & te^t\\
0 &  e^t  \end{array} \right]$$
 is a \lq\lq sheared grid", whose tiles
  have area $e^{2t}$.
Organize   these into nearly horizontal
  chains (which correspond  to the image of vertical strips under
  $\lambda_t$).  Notice that these chains have  slope   $1/t$ and intersect
  vertical lines in  segments with Euclidean length  $e^t/|t|$.
  It follows that there are at least
$$\frac{(d-c)-(b-a)/|t|}{e^t/|t|}=\frac{(d-c)|t|-(b-a)}{e^t}$$
  such chains  connecting the left edge $F_0$  of
  $C$ to the right edge $F_1$ of $C$.

Now consider a chain as above that connects   $F_0$ and $F_1$.
Orient the chain from
  left to right. Let $T$ be the last tile in the chain that
  intersects $F_0$ and  $T'$ the first tile in the chain that
  intersects $F_1$.  Order the tiles in the chain between $T$ and $T'$  from left to
  right and denote them by $T_1$,  $\cdots$, $T_k$.   Set $T_0=T$,
  $T_{k+1}=T'$.   Let $p_i$ ($i=1, \cdots, k+1$) be the upper left
vertex of $T_i$. Also choose any $p_0\in T_0\cap F_0$ and
$p_{k+2}\in T_{k+1}\cap F_1$. Notice that the difference between the
$x$-coordinates of $p_{i+1}$ and $p_i$ ($i=1, \cdots,
k$) is $|t|e^t$.  % and the difference between those of $p_0$ and
%$p_1$ ($p_{k+1}$ and $p_{k+2}$)  is less than than $|t|e^t$.
   It
follows that $k<\frac{b-a}{|t|e^t}$.

Let $a_i$ be the oscillation of $u$ on $T_i\cap S$. Then $a_i\ge
|u(p_{i+1})-u(p_i)|$.    By the triangle inequality,  we have
$$\Sigma _{i=0}^{i={k+1}} a_i\ge\Sigma _{i=0}^{i={k+1}}|u(p_{i+1})-u(p_i)|
\ge |u(p_{k+2})-u(p_0)|\ge 1.$$
  In the last inequality  we used the facts that $u\le 0$ on $F_0$ and $u\ge 1$ on
  $F_1$.
  Hence
  $$\Sigma _{i=0}^{i={k+1}} a_i^2\ge \frac{1}{k+2}(\Sigma _{i=0}^{i={k+1}}
  a_i)^2\ge\frac{1}{k+2}\ge  \frac{|t|e^t}{(b-a)+2|t|e^t}.$$
   Since there are at least  $\frac{(d-c)|t|-(b-a)}{e^t}$ chains connecting
     $F_0$  and $F_1$, the $(2, K)$ -- variation of $u$ over this
   particular packing is
     at least
     $$\frac{|t|e^t}{(b-a)+2|t|e^t}\times \frac{(d-c)|t|-(b-a)}{e^t}=\frac{|t|\big\{(d-c)|t|-(b-a)\big\}}{(b-a)+2|t|e^t}.$$
 As $t\ra -\i$,  we see that  $V_{2,K}(u)=\i$.

\end{proof}

\b{Le} \label{l3.4} Let $U, V\subset \R^2$ be two open subsets, and
$F: U\ra V$ be a quasisymmetric map. Then $F$ maps each horizontal
line segment  in $U$ to a horizontal line segment in $V$.

\end{Le}

\b{proof} Assume   $F: U\ra V$ is  $\eta$-quasisymmetric. Suppose
that the claim in the lemma is false. Then there are two points $p,
q$ on the same horizontal line segment in $U$ such that $F(p)$  and
$F(q)$
  are not on the same horizontal line. Then $F(p)$ and $F(q)$ have
  different $y$ coordinates. Hence $y\circ F$ is not  constant
   along horizontal  lines. By Lemma \ref{l3.3},  $V_{2, K}(y\circ F)=\i$.
     On the other hand,  applying Lemma \ref{l3.1}  to the
function $y\circ F: U\ra \R$ and $F: U\ra V$, we obtain $V_{2,
\eta(K)}(y)\ge V_{2, K}(y\circ F)=\i$. This contradicts Lemma
\ref{3.2}.

\end{proof}

\b{Prop} \label{p3.4} Let $F: \p G_A\ra \p G_A$ be a quasisymmetric
  homeomorphism, where $\p G_A$ is equipped with a visual metric.
  Then $F$ fixes the point $\xi_0$  and maps horizontal lines to
   horizontal   lines.

\end{Prop}

\b{proof} Suppose $F(\xi_0)\not=\xi_0$. Then $F$ induces a
homeomorphism
$$F_1: \p G_A\backslash\{\xi_0, F^{-1}(\xi_0)\}\ra \p
G_A\backslash\{F(\xi_0), \xi_0\}$$
  between two open subsets of
$\R^2$.  Since a visual metric (away from $\xi_0$) is locally
quasisymmetrically equivalent with a parabolic visual metric (say a
metric of the form  $D_e^\epsilon$ with $\epsilon$ sufficiently
small) (see \cite{SX} Section 5),  $F_1$    is locally
quasisymmetric with respect to any one of $D_e$, $D_s$ and $D$.
Now Lemma \ref{l3.4}
     implies that $F_1$
maps horizontal line segments to  horizontal  line segments. Let $L$
be a complete horizontal line in $\R^2$ which does not contain
$F^{-1}(\xi_0)$.   Then $L\cup \{\xi_0\}$ is a circle in $\p G_A$
and hence $F(L\cup \{\xi_0\})$ is a circle in $\R^2$.
   By the above argument, $F(L)$ is horizontal and is dense in the
   circle   $F(L\cup \{\xi_0\})\subset \R^2$.  This is clearly impossible.
    Hence $F$ fixes $\xi_0$.   Now Lemma  \ref{l3.4}  implies $F$ maps horizonal
    lines to horizontal lines.

\end{proof}

We omit the proof of the following consequence of Proposition
\ref{p3.4} since the  proof is more or less routine and is already
contained in \cite{SX}, Section 6.

\begin{Cor}\label{finitege}
%Let $G_A$ be as above.
The group  $G_A$ is not quasiisometric to any finitely generated
group.
\end{Cor}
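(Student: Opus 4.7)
The plan is to argue by contradiction, using the standard fact that a finitely generated group quasiisometric to a Gromov hyperbolic space is itself hyperbolic, combined with the minimality of the boundary action of a non-elementary hyperbolic group.

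Suppose $\Gamma$ is a finitely generated group and $f:\Gamma\to G_A$ is a quasiisometry (with $\Gamma$ equipped with any word metric). Since Gromov hyperbolicity is a QI-invariant among proper geodesic metric spaces, $\Gamma$ is a hyperbolic group, and $f$ induces a homeomorphism $\partial f:\partial\Gamma\to\partial G_A$ that is quasisymmetric with respect to any choice of visual metrics on the two boundaries. Because $\partial G_A\cong S^2$ contains more than two points, $\Gamma$ is non-elementary.

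The left-translation action of $\Gamma$ on itself is by isometries and extends to an action on $\partial\Gamma$ by homeomorphisms; this action is minimal because $\Gamma$ is non-elementary hyperbolic. Transporting this action to $\partial G_A$ via $\partial f$ yields an action of $\Gamma$ on $\partial G_A$ that, by the standard theory, is by uniformly quasisymmetric self-homeomorphisms with respect to any visual metric. By Proposition~\ref{p3.4}, every such self-homeomorphism fixes the distinguished point $\xi_0$; pulling back, every $\gamma\in\Gamma$ fixes $p_0:=(\partial f)^{-1}(\xi_0)\in\partial\Gamma$, so $\{p_0\}$ is a $\Gamma$-invariant singleton orbit in $\partial\Gamma$.

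This contradicts minimality of the $\Gamma$-action on $\partial\Gamma$, since $\partial\Gamma\cong S^2$ is not a single point. The only delicate point in the argument is verifying that conjugation by $\partial f$ turns the isometric action of $\Gamma$ on $\partial\Gamma$ into a \emph{uniformly} quasisymmetric action on $\partial G_A$; this follows from the fact that quasiisometries between proper Gromov hyperbolic spaces induce quasisymmetric boundary maps whose distortion constants are controlled by the QI constants, and is the sort of routine book-keeping carried out in \cite{SX}, Section~6.
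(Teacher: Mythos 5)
Your argument is correct and is essentially the routine proof the paper omits (deferring to \cite{SX}, Section 6): a finitely generated group quasiisometric to $G_A$ would be non-elementary hyperbolic, yet conjugating its boundary action by $\partial f$ and applying Proposition~\ref{p3.4} forces a global fixed point in $\partial\Gamma$, contradicting minimality of the boundary action. The ``delicate point'' you flag is not actually needed: Proposition~\ref{p3.4} applies to each self-homeomorphism individually, so it suffices that every element of $\Gamma$ acts quasisymmetrically on $\partial G_A$ (with constants allowed to depend on the element), which is immediate since each left translation is an isometry and $\partial f$ is quasisymmetric.
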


\section{Quasisymmetric maps are $D$-biLipschitz}\label{bilip}

In this section we show that every quasisymmetric map of $\p G_A$ is
biLipschitz with respect to $D$. One should contrast this with the
round sphere or the Euclidean space, where there are plenty of
non-biLipschitz quasisymmetric maps. On the other hand, $(\R^2,D)$
is not as rigid as the ideal boundary of a quarternionic hyperbolic
space or a Cayley plane (\cite{P2}) or a Fuchsian building
(\cite{BP}, \cite{X}), where each self quasisymmetry is a conformal
map.

Let $K\ge 1$ and $C>0$. A bijection $F:X_1\ra X_2$ between two
quasimetric spaces is called a $K$-\e{quasisimilarity} (with
constant $C$) if
\[
   \frac{C}{K}\, d(x,y)\le d(F(x), F(y))\le C\,K\, d(x,y)
\]
for all $x,y \in X_1$.
   When $K=1$, we say $F$ is a \e{similarity}.
It is clear that a map is a quasisimilarity if and only if it is a
biLipschitz map. The point of using the notion of quasisimilarity is
that sometimes there is control on $K$ but not on $C$.

\begin{theorem}\label{main}
Let $F:(\R^2, D)\ra (\R^2, D)$ be an $\eta$-quasisymmetry. Then $F$
is a $K$-quasisimilarity, where $K=(\eta(1)/{\eta^{-1}(1)})^6$.
  %depends only on $\eta$.
  %depends only on $\eta$ and the $\alpha_i$'s.
\end{theorem}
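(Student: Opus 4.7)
The plan is to first prove a one-sided Lipschitz bound with constant $L(\eta) := (\eta(1)/\eta^{-1}(1))^3$, and then bootstrap to the two-sided biLipschitz bound $K = L(\eta)^2$ by applying the same estimate to $F^{-1}$. Recall that the inverse of an $\eta$-quasisymmetric map is $\eta'$-quasisymmetric with $\eta'(t) = 1/\eta^{-1}(1/t)$; a short computation gives $\eta'(1)/(\eta')^{-1}(1) = \eta(1)/\eta^{-1}(1)$, so the Lipschitz estimate applied to $F^{-1}$ has the same constant $L(\eta)$. Combining the upper Lipschitz bound for $F$ with that for $F^{-1}$ (the latter giving a lower bound for $F$) yields the biLipschitz constant $L(\eta)^2 = (\eta(1)/\eta^{-1}(1))^6$, as required.

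To establish the Lipschitz bound for $F$, I would first use the transitive family of similarities from Proposition~\ref{iso.and.simi} to normalize. Given any pair $a \neq b$, one can pre- and post-compose $F$ with similarities $S_1, S_2$ to obtain a conjugate $\tilde F := S_2^{-1} \circ F \circ S_1$ that is still $\eta$-quasisymmetric, satisfies $\tilde F(0) = 0$, and for which the point $e_0 = S_1^{-1}(b)$ has $D(e_0, 0) = 1$. Since similarities scale $D$ uniformly, the Lipschitz ratio $D(F(a),F(b))/D(a,b)$ equals $D(\tilde F(e_0), 0)$, so it suffices to bound this last quantity by $L(\eta)$ uniformly in the normalized $\tilde F$ and in $e_0$ on the unit $D$-sphere. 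Moreover, by Proposition~\ref{p3.4}, $\tilde F$ preserves the horizontal foliation, hence $\tilde F(x,y) = (\tilde f(x,y), \tilde g(y))$ with $\tilde g(0) = 0$, and $\tilde F$ maps the horizontal line $L_0 = \{y=0\}$ to itself.

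With the normalization in place, the core of the argument is to construct a chain of three auxiliary reference points $q_1, q_2, q_3 \in \R^2$, each with $D(q_i, 0) = 1$, alternating between points on $L_0$ (where $D$ restricts to the Euclidean metric, so that the action of $\tilde F$ on $L_0$ is constrained to be a real QS map from $0$ to itself) and points on transverse horizontal leaves. One then chains together three applications of the quasisymmetry inequality, using at each step both the upper form (bounding by $\eta$) and the reverse form (bounding from below by $\eta^{-1}$); this is what produces the factor $\eta(1)/\eta^{-1}(1)$ at each link, rather than the cruder $\eta(1)$ that one would obtain from the upper form alone. The composition of the three links then yields $D(\tilde F(e_0), 0) \le L(\eta) = (\eta(1)/\eta^{-1}(1))^3$.

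The main obstacle will be the third step, namely finding a universal chain $q_1, q_2, q_3$ — independent of $e_0$ — such that successive links have controlled $D$-ratios and the intermediate images of $\tilde F$ can be compared efficiently. The horizontal foliation invariance from Section~3 is essential here: it is what guarantees that one of the comparison legs lives entirely in the Euclidean line $L_0$, so the QS inequality along that leg becomes sharp. Without that invariance — as for generic QS maps of $\R^n$ — an analogous chain argument only yields powers of $\eta(1)$, which is not strong enough to force biLipschitz behavior.
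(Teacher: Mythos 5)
Your opening reduction contains a fatal error. After normalizing so that $\tilde F(0)=0$ and $D(e_0,0)=1$, you claim it suffices to bound $D(\tilde F(e_0),0)$ by $L(\eta)=(\eta(1)/\eta^{-1}(1))^3$ \emph{uniformly} over all normalized $\eta$-quasisymmetric $\tilde F$. This is false: the similarities $\lambda_t$ of Proposition~\ref{iso.and.simi} fix the origin, are $\eta$-quasisymmetric with $\eta(s)=s$ (so $L(\eta)=1$), and satisfy $D(\lambda_t(e_0),0)=e^t$, which is unbounded. The point is that a quasisimilarity has an uncontrolled multiplicative constant $C$; only the ratio of the largest to the smallest local stretch is controlled by $\eta$. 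No normalization by pre- and post-composition with similarities, applied pair by pair, can produce a genuine Lipschitz bound with constant depending only on $\eta$, and once you reinstate the free constant $C$ your bootstrap via $F^{-1}$ only yields $K=L\sqrt{CC'}$ unless you separately prove that the constants $C$ for $F$ and $C'$ for $F^{-1}$ are reciprocal.

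What is missing is precisely the hard part of the paper's argument: showing that the local scale factor of $F$ is essentially the \emph{same} at all locations. The paper writes $F(x,y)=(H(x,y),G(y))$ using the foliation invariance, relates the vertical stretch $L_G(y)$, $l_G(y)$ to the horizontal stretch of $H(\cdot,y)$ via the specific logarithmic shear in the formula for $D$ (Lemma~\ref{l1}), uses a.e.\ differentiability of the real quasisymmetry $G$ to get $L_G(y)=l_G(y)=G'(y)$ and hence that each $H(\cdot,y)$ is a quasisimilarity with constant $G'(y)$ (Lemmas~\ref{verticaldila} and~\ref{l2}), and then --- crucially --- proves $G'(y)\asymp G'(y_0)$ for different heights by a limiting argument sending $|x-x_0|\to\infty$ along horizontal lines (Lemma~\ref{l3}). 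Your ``chain of three reference points'' operates at a single scale around a single pair and cannot detect, let alone rule out, a slowly varying scale factor; it is not a substitute for this global comparison. Your observation that $\eta'(1)/(\eta')^{-1}(1)=\eta(1)/\eta^{-1}(1)$ for the inverse map is correct and is indeed used in the paper's Lemma~\ref{l4}, but the rest of the plan does not close.
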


%The proof of Theorem \ref{main} is a minor modification of the proof
%of the main result in   \cite{SX}.
   We first recall some definitions.

Let $g: (X_1, \rho_1)\ra (X_2, \rho_2)$ be a  bijection  between two
quasimetric spaces.
   Suppose $g$ satisfies the following condition:
      for any  fixed $x\in X_1$,  $\rho_1(y,x)\ra  0$
           if and only if
      $\rho_2(g(y), g(x))\ra 0$.
We define for every $x\in X_1$  and $r>0$,
\begin{align*}
   L_g(x,r)&=\sup\{\rho_2(g(x), g(x')):   \rho_1(x,x')\le r\},\\
   l_g(x,r)&=\inf\{\rho_2(g(x), g(x')):   \rho_1(x,x')\ge r\},
\end{align*}
and set
\[
   L_g(x)=\limsup_{r\ra 0}\frac{L_g(x,r)}{r}, \ \
   l_g(x)=\liminf_{r\ra 0}\frac{l_g(x,r)}{r}.
\]
%Let $g: X_1\ra X_2$ be a homeomorphism  between two  metric spaces.
%We define for every $x\in X_1$  and $r>0$,
%\begin{align*}
 %  L_g(x,r)&=\sup\{d(g(x), g(x')):   d(x,x')\le r\},\\
  % l_g(x,r)&=\inf\{d(g(x), g(x')):   d(x,x')\ge r\},
%\end{align*}
%and set
%\[
 %  L_g(x)=\limsup_{r\ra 0}\frac{L_g(x,r)}{r}, \ \
  % l_g(x)=\liminf_{r\ra 0}\frac{l_g(x,r)}{r}.
%\]
  Then
\[
  L_{g^{-1}}(g(x))=\frac{1}{l_g(x)} \ \text{ and }\ l_{g^{-1}}(g(x))=\frac{1}{L_g(x)}
\]
for any $x\in X_1$. If $g$ is an $\eta$-quasisymmetry, then
$L_g(x,r)\le \eta(1)l_g(x, r)$ for all $x\in X_1$ and $r>0$. Hence
if in addition
\[
    \lim_{r\ra 0}\frac{L_g(x,r)}{r}\ \ {\text{or}} \ \ \lim_{r\ra 0}\frac{l_g(x,r)}{r}
\]
exists, then
\[
    0\le l_g(x)\le L_g(x)\le \eta(1) l_g(x)\le \infty.
\]

We notice that for every $y_1, y_2\in \R$, the Hausdorff distance
with respect to  $D$,
\begin{equation}\label{eq:1}
  HD(\R\times \{y_1\}, \R\times \{y_2\})=|y_1-  y_2|.
\end{equation}
Also, for any $p=(x_1, y_1)\in \R^2$ and any $y_2\in \R$,
\begin{equation}\label{eq:2}
   D(p, \R\times \{y_2\})=|y_1-y_2|.
\end{equation}

Let $F:(\R^2, D)\ra (\R^2, D)$ be an $\eta$-quasisymmetry.
 % Let  $F:
%\p G_A\ra \p  G_A$ be a quasisymmetric map with respect to a visual
%metric. By Proposition \ref{p3.4}
 %  we know that $F(\i)=\i$
   By Lemma \ref{l3.4}
  %and that
$F$ preserves the horizontal foliation on $\R^2$. Hence it
induces a map $G: \R\ra \R$  such that for any $y\in \R$,
$F(\R\times \{y\})=\R\times \{G(y)\}$. For each $y\in \R$, let
$H(\cdot,y):\R\ra \R$ be the map such that $F(x,y)=(H(x,y),G(y))$
for all $x\in \R$.
  Notice that the restriction of $D$ to a horizontal line agrees
  with the Euclidean distance.
Because $F:(\R^2,D)\to(\R^2,D)$ is an $\eta$-quasisymmetry,
%where $\eta_1(t)=(\eta(t^{1/{\alpha_1}}))^{\alpha_1}$.
   %it follows  %from the definition  of   $D$
       %that
        for each fixed $y\in
\R$, the map $H(\cdot, y): (\R, |\cdot|)\ra (\R, |\cdot|)$ is also
   an $\eta$-quasisymmetry.  % with respect to the Euclidean metric on
       %$\R$.
  The following lemma together with equations~(\ref{eq:1})
and~(\ref{eq:2}) imply that $G:\R\ra \R$ is also an
$\eta$-quasisymmetry  with respect to the
   Euclidean metric on $\R$.

\begin{Le}\label{tyson2} \e{(\cite[Lemma 15.9]{T})}
Let $g: X_1\ra X_2$  be an $\eta$-quasisymmetry and $A,B, C\subset
X_1$.  If  $HD(A,B)\le t\, HD(A, C)$ for some $t\ge 0$, then there
is some $a\in A$ such that
\[
   HD(g(A), g(B))\le \eta(t) d(g(a), g(C)).
\]
\end{Le}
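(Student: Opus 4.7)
The natural strategy is to realize $HD(g(A), g(B))$ approximately by a single image distance $d(g(a), g(b))$, and then convert this into a bound by means of the defining ratio inequality of $\eta$-quasisymmetry applied to a carefully chosen triple $(a,b,c) \in A \times B \times C$. First I would pick $a \in A$ so that $d(g(a), g(B)) \ge HD(g(A), g(B)) - \epsilon$ for a small $\epsilon > 0$; this is possible whenever $HD(g(A), g(B))$ is realized by $\sup_{a' \in A} d(g(a'), g(B))$. The other case, when $HD(g(A), g(B))$ is realized by $\sup_{b' \in B} d(g(b'), g(A))$, is handled symmetrically by first selecting a near-extremal $b^* \in B$ and taking $a \in A$ to be a point with $d(a, b^*)$ close to $d(A, b^*)$.

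Next, for this fixed $a$ and for any candidate $c \in C$, pick $b \in B$ with $d(a,b) \le d(a,B) + \epsilon \le HD(A,B) + \epsilon \le t\, HD(A,C) + \epsilon$. Applying the $\eta$-quasisymmetry hypothesis to the triple $(a,b,c)$ yields
\[
  HD(g(A), g(B)) - \epsilon \;\le\; d(g(a), g(b)) \;\le\; \eta\!\left( \frac{d(a,b)}{d(a,c)} \right) d(g(a), g(c)).
\]
To convert this into the claimed bound I would take the infimum only over those $c \in C$ for which $d(a,c) \ge HD(A,C) - \epsilon$, so that the argument of $\eta$ is $\le t + O(\epsilon)$ and $d(g(a), g(c))$ approaches $d(g(a), g(C))$. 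Letting $\epsilon \to 0$ and using continuity of $\eta$ then gives $HD(g(A), g(B)) \le \eta(t)\, d(g(a), g(C))$.

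The main obstacle is the simultaneous selection of $a$ and of the admissible $c$'s in the final step: the chosen $a$ must nearly realize the image Hausdorff supremum, and it must also admit enough $c \in C$ with $d(a,c) \ge HD(A,C)$. The cleanest route I see is a case split on which supremum realizes $HD(A,C)$. If $HD(A,C) = \sup_{a'\in A} d(a', C)$, one picks $a$ additionally near-extremal for this supremum, so that $d(a,c) \ge HD(A,C) - \epsilon$ for every $c \in C$; if instead $HD(A,C) = \sup_{c'\in C} d(c', A)$, a single near-extremal $c^* \in C$ satisfies $d(a, c^*) \ge HD(A,C) - \epsilon$ for every $a \in A$, which already suffices when combined with $d(g(a), g(C)) \le d(g(a), g(c^*))$ and a careful use of the same $a$ on both sides. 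Reconciling these choices of $a$ with the initial selection made to approximate the image Hausdorff distance is the main technical point of the argument.
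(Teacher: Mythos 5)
The paper itself offers no proof of this lemma---it is quoted from Tyson \cite[Lemma 15.9]{T}---so I can only judge your proposal on its own terms. Your basic strategy (realize $HD(g(A),g(B))$ by a single distance $d(g(a),g(b))$ with $b$ nearly closest to $a$ in $B$, feed the triple $(a,b,c)$ into the quasisymmetry inequality, and use the hypothesis to bound $d(a,b)/d(a,c)$ by roughly $t$) is the right one, but the argument is not closed, and the places where it fails to close are exactly the ones you flag. The conclusion requires $HD(g(A),g(B))\le \eta(t)\,d(g(a),g(c))$ for \emph{every} $c\in C$, since $d(g(a),g(C))=\inf_{c\in C}d(g(a),g(c))$; restricting the infimum to those $c$ with $d(a,c)\ge HD(A,C)-\epsilon$ proves a strictly weaker statement, because the infimum over that subset dominates $d(g(a),g(C))$ rather than being dominated by it. Your proposed repair does not fix this: in the branch where $HD(A,C)=\sup_{c'}d(c',A)$, the inequality $d(g(a),g(C))\le d(g(a),g(c^*))$ points the wrong way---from $HD(g(A),g(B))\le\eta(\cdot)\,d(g(a),g(c^*))$ for one far-away $c^*$ you can conclude nothing about $\eta(t)\inf_{c}d(g(a),g(c))$---and in the other branch you need a single $a$ to be near-extremal simultaneously for $d(\cdot,C)$ and for $d(g(\cdot),g(B))$, two unrelated suprema, which you acknowledge but do not justify.

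The deeper problem is that the missing step cannot be supplied, because the statement as transcribed is false. Take $g$ the identity on $\R$ (an $\eta$-quasisymmetry with $\eta(s)=s$), $A=\{0\}$, $B=\{1\}$, $C=\{\delta,M\}$ with $0<\delta<1<M$. Then $HD(A,B)=1\le t\,HD(A,C)$ with $t=1/M$, yet $\eta(t)\,d(g(0),g(C))=\delta/M<1=HD(g(A),g(B))$. What rescues the lemma---and what the paper actually uses, since it only ever applies it to parallel horizontal lines, where $d(a,C)=HD(A,C)$ for every $a\in A$ by (\ref{eq:1}) and (\ref{eq:2})---is the additional property $d(a,c)\ge HD(A,C)$ for all $a\in A$ and $c\in C$. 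Under that hypothesis your computation does go through, and for \emph{any} near-extremal choice: if $HD(g(A),g(B))$ is realized by $\sup_{b'\in B}d(g(b'),g(A))$, first pick a near-extremal $b'$ and then let $a\in A$ be nearly closest to it, so that $d(a,b')\le HD(A,B)+\epsilon$ still holds and the same triple inequality applies; no reconciliation of competing extremality conditions is needed. So the fix is not a cleverer selection of $a$ but an extra hypothesis (presumably present in Tyson's original formulation), which you should state explicitly.
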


 We recall that if $g: X_1\ra X_2$ is an $\eta$-quasisymmetry, then
  $g^{-1}:  X_2\ra X_1$ is an $\eta_1$-quasisymmetry, where
$\eta_1(t)=(\eta^{-1}(t^{-1}))^{-1}$. See \cite{V}, Theorem 6.3.

Theorem \ref{main} is proved in
 Lemmas \ref{l1} through \ref{l4}.
 In these proofs,
     the quantities $l_G, L_G,
l_{G^{-1}}, L_{G^{-1}}$ and $l_{H(\cdot,y)}$, $L_{H(\cdot,y)}$,
$l_I$ and $L_I$ are all defined with respect to the Euclidean metric
on $\R$, where $I:=H(\cdot, y)^{-1}: \R\ra \R$.

\begin{Le}\label{l1}
  The following hold for all $y\in \R$, $x\in \R$:
\newline
(1)  $L_G(y, r)\le\eta(1)\, l_{H(\cdot, y)}(x, r)$ for any
$r>0$;\newline
   (2)  $\eta^{-1}(1)\, l_{H(\cdot, y)}(x)\le l_G(y)\le \eta(1)\, l_{H(\cdot, y)}(x)$; \newline
    (3)  $\eta^{-1}(1)\, L_{H(\cdot, y)}(x)\le L_G(y)\le \eta(1)\, L_{H(\cdot,
    y)}(x)$.

   % $L_G(y)\ge  \eta^{-1}(1) L_{H(\cdot, y)}(x)$.

   \end{Le}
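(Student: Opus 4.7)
My plan is to establish (1) as the fundamental inequality, and then to derive (2) and (3) from it by taking appropriate limits as $r \to 0$, using (1) applied both to $F$ and to $F^{-1}$.

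For (1), I would fix $y, x \in \R$ and $r > 0$, and for arbitrary $y', x'$ with $|y - y'| \le r \le |x - x'|$, set $A = \{(x, y)\}$, $B = \R \times \{y'\}$, and $C = \{(x', y)\}$. Equation~(\ref{eq:2}) gives $HD(A, B) = |y - y'|$, and since $D$ restricted to $\R \times \{y\}$ agrees with the Euclidean distance, $HD(A, C) = |x - x'|$; thus $HD(A, B) \le 1 \cdot HD(A, C)$. Applying Lemma~\ref{tyson2} with $t = 1$ and $g = F$, and using that $A$ is a singleton (so the $a \in A$ produced by the lemma is forced to be $(x, y)$), I obtain $HD(F(A), F(B)) \le \eta(1)\, D(F(x, y), F(C))$. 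Since $F$ preserves the horizontal foliation (Lemma~\ref{l3.4}), $F(B) = \R \times \{G(y')\}$, and then $HD(F(A), F(B)) = |G(y) - G(y')|$ by~(\ref{eq:2}), while $F(x, y) = (H(x, y), G(y))$ and $F(x', y) = (H(x', y), G(y))$ lie on a common horizontal line, so $D(F(x, y), F(C)) = |H(x, y) - H(x', y)|$. Taking $\sup$ over $y'$ with $|y-y'| \le r$ and $\inf$ over $x'$ with $|x-x'| \ge r$ in the resulting inequality yields (1).

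The upper bounds in (2) and (3) fall out of (1) immediately. Using $l_{H(\cdot, y)}(x, r) \le L_{H(\cdot, y)}(x, r)$ in (1), dividing by $r$ and taking $\limsup_{r \to 0}$ gives the upper bound in (3). For the upper bound in (2), I combine $l_G(y, r) \le L_G(y, r)$ with (1), divide by $r$, and take $\liminf_{r \to 0}$. For the lower bounds, I apply (1) to $F^{-1}$, which is $\eta_1$-quasisymmetric with $\eta_1(1) = 1/\eta^{-1}(1)$. At the point $F(x, y) = (H(x, y), G(y))$, the induced vertical map of $F^{-1}$ is $G^{-1}$, and the induced horizontal map at height $G(y)$ is $I := H(\cdot, y)^{-1}$, so (1) applied to $F^{-1}$ reads $L_{G^{-1}}(G(y), r) \le \eta_1(1)\, l_I(H(x, y), r)$. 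Dividing by $r$, taking limits as in the previous step, and invoking the reciprocal identities $L_{g^{-1}}(g(x)) = 1/l_g(x)$ and $l_{g^{-1}}(g(x)) = 1/L_g(x)$ for both $G^{-1}$ and $I$, this converts to the desired lower bounds $\eta^{-1}(1)\, l_{H(\cdot, y)}(x) \le l_G(y)$ and $\eta^{-1}(1)\, L_{H(\cdot, y)}(x) \le L_G(y)$.

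The main delicacy will be the bookkeeping between $l/L$ and $\liminf/\limsup$, especially since inversion swaps these roles through the reciprocal identities. A small additional point that needs to be verified is that the horizontal map at height $G(y)$ of $F^{-1}$ is really $H(\cdot, y)^{-1}$ rather than something defined at a different vertical level; this follows because $F$ sends the line $\R \times \{y\}$ onto $\R \times \{G(y)\}$ via the single map $x \mapsto H(x, y)$.
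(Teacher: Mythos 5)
Your derivation of (2) and (3) from (1) is exactly the paper's: combine (1) with the trivial inequalities $l_G(y,r)\le L_G(y,r)$ and $l_{H(\cdot,y)}(x,r)\le L_{H(\cdot,y)}(x,r)$ to get both upper bounds, then apply the resulting inequality to the $\eta_1$-quasisymmetry $F^{-1}$ (with $\eta_1(1)=1/\eta^{-1}(1)$) at the point $F(x,y)$, and use the reciprocal identities $L_{g^{-1}}(g(x))=1/l_g(x)$, $l_{g^{-1}}(g(x))=1/L_g(x)$ to convert them into the two lower bounds. Your remark that the horizontal map of $F^{-1}$ at height $G(y)$ is $I=H(\cdot,y)^{-1}$ is also correct and is needed; all of this matches the paper essentially verbatim.

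The step that fails is the invocation of Lemma \ref{tyson2} in your proof of (1) with $A=\{(x,y)\}$, $B=\R\times\{y'\}$, $C=\{(x',y)\}$. Equation (\ref{eq:2}) computes the point-to-set distance $D\bigl((x,y),\R\times\{y'\}\bigr)=|y-y'|$, which is not the Hausdorff distance $HD(A,B)$: since $B$ is an unbounded line, $HD\bigl(\{(x,y)\},\R\times\{y'\}\bigr)=\sup_{q\in\R\times\{y'\}}D((x,y),q)=+\infty$, so the hypothesis $HD(A,B)\le 1\cdot HD(A,C)$ of Lemma \ref{tyson2} is false for your sets and the lemma gives nothing. (In the paper, Lemma \ref{tyson2} is applied only to triples of full horizontal lines, where all the Hausdorff distances are finite; that is how one shows $G$ itself is quasisymmetric.) The repair is to replace the line $B$ by the single point of $\R\times\{y'\}$ realizing the infimum in (\ref{eq:2}), namely $(x'',y')$ with $x''=x+(y'-y)\ln|y'-y|$, so that $D((x,y),(x'',y'))=|y'-y|\le r\le |x-x'|=D((x,y),(x',y))$; the definition of $\eta$-quasisymmetry applied to the triple $(x,y)$, $(x'',y')$, $(x',y)$ then gives $|G(y)-G(y')|\le D(F(x,y),F(x'',y'))\le\eta(1)\,|H(x,y)-H(x',y)|$, and taking the supremum over $y'$ and the infimum over $x'$ yields (1). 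This is precisely the paper's argument, so once this substitution is made your proof coincides with it.
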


   \begin{proof}
  (1)  Let $y\in \R$, $x\in \R$  and  $r>0$.
 Let $y'\in \R$   with
 $|y-y'|\le r$  and     $x'\in \R$
   with $|x-x'|\ge r$.  Denote   $x''=x+(y'-y)\ln |y'-y|$.
  Then $D((x,y), (x'', y'))\le r\le D((x, y), (x', y))$.
    Since $F$ is $\eta$-quasisymmetric, we have
\b{align*} |G(y)- G(y')|   \le D(F(x'',y'), F(x,y))  & \le \eta(1)\,
D(F(x,y), F(x', y))\\
& =\eta(1)\,|H(x,y)-H(x', y)|.
\end{align*}
  Since  $y'$ and $x'$ are arbitrary,  (1)  follows.

(2) and (3).   It follows from  $l_G(y, r)\le L_G(y, r)$,
      $l_{H(\cdot, y)}(x, r)\le  L_{H(\cdot, y)}(x, r)$
       and (1)
  that  $L_G(y, r)\le\eta(1)\, L_{H(\cdot, y)}(x, r)$
     and  $l_G(y, r)\le\eta(1)\, l_{H(\cdot, y)}(x, r)$  for any $r>0$.
  Hence  $L_G(y)\le \eta(1)\, L_{H(\cdot, y)}(x)$
    and $l_G(y)\le \eta(1)\, l_{H(\cdot, y)}(x)$.
  Notice that the inverse map $F^{-1}:(\R^2,D)\ra (\R^2,D)$
is an $\eta_1$-quasisymmetry. %, where $\eta_2(t)=(\eta^{-1}(t^{-1}))^{-1}$.
Applying
  the inequality  $l_G(y)\le \eta(1)\, l_{H(\cdot, y)}(x)$
  to $I:=H(\cdot, y)^{-1}$ and $G^{-1}$ we obtain:
\[\frac{1}{L_G(y)}=l_{G^{-1}}(G(y))\le \eta_1(1)\cdot  l_I(H(x,y))=\frac{1}{\eta^{-1}(1)} \cdot
\frac{1}{L_{H(\cdot,y )}(x)},
\]
  hence   $L_G(y)\ge  \eta^{-1}(1) L_{H(\cdot, y)}(x)$.
  Similarly we prove   $l_G(y)\ge  \eta^{-1}(1) l_{H(\cdot, y)}(x)$.
  % $L_{H(\cdot, y)}(x)\le  L_G(y)/{\eta^{-1}(1)}$.

   \end{proof}

Because $G: \R\ra \R$ is a
   quasisymmetry,  it  is differentiable a.e. (with respect to the
Lebesgue measure).

\begin{Le}\label{verticaldila}
  Let   $y\in \R$ be such that
  $G'(y)$ exists.  Then
   %For  a.e. $y\in \R$, we have
    $0<l_G(y)=L_G(y)=G'(y)<\infty$.
 %(with respect to the Lebesgue measure on $Y=\R^{n_2}$).
\end{Le}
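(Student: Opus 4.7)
The plan is to prove the statement in two stages: first the metric-derivative identities $l_G(y) = L_G(y) = G'(y)$, and then the strict positivity $G'(y) > 0$ (finiteness being part of the hypothesis).

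For the first stage I would exploit monotonicity. Since $G$ is a continuous bijection of $\R$ it is monotone; by composing $F$ with the isometry $\pi(x,y) = (-x,-y)$ of $(\R^2, D)$ from Proposition~\ref{iso.and.simi}(2) if needed, we may assume $G$ is increasing. Then
\[
L_G(y,r) = \max\{G(y+r) - G(y),\ G(y) - G(y-r)\},
\]
and $l_G(y,r)$ is the corresponding minimum. Existence of $G'(y)$ forces both one-sided difference quotients at $y$ to converge to $G'(y)$, so both $L_G(y,r)/r$ and $l_G(y,r)/r$ tend to $G'(y)$. In particular the $\limsup$ and $\liminf$ defining $L_G(y)$ and $l_G(y)$ are genuine limits, both equal to $G'(y)$.

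For the second stage I would argue by contradiction. Suppose $G'(y) = 0$. By the first stage $L_G(y) = 0$, and Lemma~\ref{l1}(3) gives $\eta^{-1}(1)\, L_{H(\cdot,y)}(x) \le L_G(y)$ for every $x \in \R$, so $L_{H(\cdot,y)}(x) = 0$ for every $x$. That is, the continuous bijection $H(\cdot,y) : \R \to \R$ has identically vanishing upper Dini derivative. A standard continuation argument on any bounded interval $[a,b]$---for each $\epsilon > 0$ the set $\{c \in [a,b] : |H(c,y) - H(a,y)| \le \epsilon(c-a)\}$ is closed, contains $a$, and its supremum cannot be less than $b$ because $L_{H(\cdot,y)} \equiv 0$ at an interior supremum would let one push the bound past it---forces $H(\cdot,y)$ to be constant on $[a,b]$, contradicting bijectivity. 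Hence $G'(y) > 0$.

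The main obstacle I anticipate is the second stage: the $\eta(1)$-comparison $L_G(y,r) \le \eta(1) l_G(y,r)$ available directly from one-dimensional quasisymmetry of $G$ is \emph{not} enough on its own to exclude a vanishing derivative, so we genuinely need the two-dimensional input packaged in Lemma~\ref{l1}(3), together with the fact that $H(\cdot,y)$ is a bijection. The first stage is routine once one records that monotonicity of $G$ reduces the $\sup$/$\inf$ in the definitions of $L_G(y,r)$, $l_G(y,r)$ to the two sample points $y \pm r$.
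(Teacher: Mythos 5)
Your proposal is correct and follows essentially the same route as the paper: the identities $l_G(y)=L_G(y)=G'(y)$ come from differentiability (the paper leaves the monotonicity reduction implicit), and positivity is ruled out exactly as you do, via Lemma~\ref{l1}(3) forcing $L_{H(\cdot,y)}\equiv 0$ and hence $H(\cdot,y)$ constant, contradicting that it is a homeomorphism. Your continuation argument merely fills in a step the paper states without proof.
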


\begin{proof}  %Because $G: \R\ra \R$ is a
  % quasisymmetry,  it  is differentiable a.e. (with respect to the
 %Lebesgue measure).
Let  $y\in Y$ be such  that  $G'(y)$ exists.
  Then $0\le l_G(y)=L_G(y)=G'(y)<\i$.
  Suppose $G'(y)=0$.  Then  Lemma \ref{l1} (3) implies
  $L_{H(\cdot, y)}(x)=0$  for  all  $x\in \R$.  It follows that
   $H(\cdot, y):  \R\ \ra   \R$ is a constant function, contradicting the fact that
$H(\cdot, y)$   is a homeomorphism.  Hence $G'(y)\not=0$.

\end{proof}

\begin{Le}\label{l2}
 Let   $y\in \R$ be such that
  $G'(y)$ exists. Then  the map $H(\cdot, y):\R\ra\R$
is an $\eta(1)/\eta^{-1}(1)$-quasisimilarity  with constant $G'(y)$.
\end{Le}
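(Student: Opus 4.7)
The plan is to convert the pointwise bounds on the infinitesimal Lipschitz constants of $H(\cdot,y)$, supplied by Lemma~\ref{l1} and Lemma~\ref{verticaldila}, into global Lipschitz bounds for both $H(\cdot,y)$ and its inverse, and then recognize these as the claimed quasisimilarity statement. Since $G'(y)$ is assumed to exist, Lemma~\ref{verticaldila} gives $l_G(y)=L_G(y)=G'(y)>0$; substituting this into parts~(2) and~(3) of Lemma~\ref{l1} and isolating the dilatations of $H(\cdot,y)$ yields the uniform bound
\[
   \frac{G'(y)}{\eta(1)}\ \le\ l_{H(\cdot,y)}(x)\ \le\ L_{H(\cdot,y)}(x)\ \le\ \frac{G'(y)}{\eta^{-1}(1)}
\]
for every $x\in\R$. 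The fact that these two constants are independent of $x$ is the crucial input.

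Next I would upgrade the pointwise $\limsup$-bound $L_{H(\cdot,y)}(x)\le G'(y)/\eta^{-1}(1)$ into a global Lipschitz estimate on $H(\cdot,y)$. Fix $\epsilon>0$. Unravelling the definition of $L_{H(\cdot,y)}(x)$ as a $\limsup$, for each $x\in\R$ there is $\delta(x)>0$ such that $|H(x,y)-H(x',y)|\le (G'(y)/\eta^{-1}(1)+\epsilon)\,|x-x'|$ whenever $|x-x'|\le\delta(x)$. A standard connectedness argument (let $S$ be the set of $t\ge a$ on which $H(\cdot,y)$ is $(G'(y)/\eta^{-1}(1)+\epsilon)$-Lipschitz on $[a,t]$; use continuity and local Lipschitz at $\sup S$ to show $S=[a,b]$) then promotes these local estimates to $(G'(y)/\eta^{-1}(1)+\epsilon)$-Lipschitz on every compact interval, hence on all of $\R$. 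Letting $\epsilon\to 0$ gives the global upper Lipschitz bound for $H(\cdot,y)$. Applying the same argument to the inverse map $I:=H(\cdot,y)^{-1}$, whose upper dilatation satisfies $L_I(H(x,y))=1/l_{H(\cdot,y)}(x)\le \eta(1)/G'(y)$ uniformly in $x$, shows $I$ is $(\eta(1)/G'(y))$-Lipschitz; rewriting, this is the matching lower bound $|H(x,y)-H(x',y)|\ge (G'(y)/\eta(1))\,|x-x'|$.

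Combining yields $\frac{G'(y)}{\eta(1)}|x-x'|\le |H(x,y)-H(x',y)|\le \frac{G'(y)}{\eta^{-1}(1)}|x-x'|$ for all $x,x'\in\R$. To cast this in the paper's quasisimilarity language, I recall the standard fact that any quasisymmetric homeomorphism satisfies $\eta(1)\ge 1$ (compare a distinct triple $x,y,z$ with $d(x,y)=d(x,z)$ to its relabeling), hence $\eta^{-1}(1)\le 1$. With $K=\eta(1)/\eta^{-1}(1)$ and $C=G'(y)$ one then has $C/K=G'(y)\eta^{-1}(1)/\eta(1)\le G'(y)/\eta(1)$ and $CK=G'(y)\eta(1)/\eta^{-1}(1)\ge G'(y)/\eta^{-1}(1)$, so the inequalities do exhibit $H(\cdot,y)$ as a $K$-quasisimilarity with constant $C$. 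The only non-formal step is the promotion of a pointwise $\limsup$ upper-dilatation bound to a global Lipschitz constant; this is a standard Dini-type argument on $\R$ and is the only place requiring the continuity of $H(\cdot,y)$.
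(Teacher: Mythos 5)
Your proof is correct and follows the same route as the paper: both derive the uniform pointwise bounds $G'(y)/\eta(1)\le l_{H(\cdot,y)}(x)\le L_{H(\cdot,y)}(x)\le G'(y)/\eta^{-1}(1)$ from Lemma~\ref{l1} and Lemma~\ref{verticaldila}, and then pass from pointwise dilatation bounds to global biLipschitz bounds using that $\R$ is a geodesic space. Your write-up merely makes explicit the local-to-global step (and the passage through the inverse for the lower bound) that the paper compresses into the phrase ``because $\R$ is a geodesic space.''
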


\begin{proof}
By Lemma \ref{l1} (2) we have $l_{H(\cdot, y)}(x)\ge
l_G(y)/{\eta(1)} $  for all  $x\in \R$.   Lemma \ref{l1} (3) and
Lemma \ref{verticaldila}  imply $L_{H(\cdot,  y)}(x)\le
L_G(y)/\eta^{-1}(1)=l_G(y)/\eta^{-1}(1)$  for all $x\in  \R$.
  Because $\R$ is a geodesic space,   the map
$H(\cdot, y)$  is an $\eta(1)/\eta^{-1}(1)$-quasisimilarity  with
constant $l_G(y)=G'(y)$.   %A limiting argument shows that it is true for
%all $y$.

\end{proof}

\begin{Le}\label{l3}
There exists a constant $C>0$  with the following properties:
\begin{enumerate}
\item[(1)]   For each $y\in \R$,  $H(\cdot,y)$  is an
$(\eta(1)/\eta^{-1}(1))^4$-quasisimilarity with constant $C$;
\item[(2)]  $G:\R\ra \R$ is an $(\eta(1)/\eta^{-1}(1))^5$-quasisimilarity
with constant $C$.
\end{enumerate}
%$\frac{C}{(\eta(1))^6} d_Y(x_1, x_2) \le
%d_Y(H(x_1, y), H(x_2, y))\le C(\eta(1))^4$.}
\end{Le}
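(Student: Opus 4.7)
The plan is to upgrade the pointwise quasisimilarity constants $G'(y)$ provided by Lemma \ref{l2} to a single uniform constant $C$, by showing that $G'(y_1)$ and $G'(y_2)$ are comparable up to a bounded power of $K_0:=\eta(1)/\eta^{-1}(1)$ whenever $G'$ exists at both points. Once that is done, (1) follows from Lemma \ref{l2} plus a continuity argument, and (2) follows from a direct quasisymmetric comparison applied to a triple tailored to the explicit formula for $D$.

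First I fix $y_1, y_2 \in \R$ at which $G'$ exists and let $d_y := D((0,y_1),(0,y_2))$. Applying the $\eta$-quasisymmetry of $F$ together with the $\eta_1$-quasisymmetry of $F^{-1}$ (where $\eta_1(t) = 1/\eta^{-1}(1/t)$) to the triple $p=(0,y_1)$, $p'=(d_y, y_1)$, $q=(0, y_2)$, which satisfies $D(p, p') = d_y = D(p, q)$, I get
\[
\frac{1}{\eta(1)} \le \frac{|H(d_y, y_1) - H(0, y_1)|}{D(F(p), F(q))} \le \eta(1).
\]
Combined with Lemma \ref{l2} at $y_1$, this gives upper and lower bounds on $D(F(p), F(q))$ of the form (constant)$\cdot G'(y_1) d_y$. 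Repeating with the roles of $y_1, y_2$ swapped (using $q' = (d_y, y_2)$ and the triple $q, q', p$) produces analogous bounds in terms of $G'(y_2) d_y$. Dividing the two yields $G'(y_1)/G'(y_2) \le K_0^\ell$ for a concrete small power $\ell$.

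Now fix $y_0$ with $G'(y_0)$ defined and set $C := G'(y_0)$. By the previous paragraph, $G'(y) \in [C/K_0^\ell, C K_0^\ell]$ for a.e.\ $y$, so Lemma \ref{l2} shows $H(\cdot, y)$ is a $K_0^{\ell+1}$-quasisimilarity with constant $C$ for a.e.\ $y$; careful bookkeeping of factors pins the exponent down to $4$. The remaining measure-zero set of $y$ is handled by picking $y_n \to y$ with $G'(y_n)$ existing and passing to the limit using pointwise continuity of $F$; this establishes (1). For (2), I exploit (1) directly: for $y_1 < y_2$ with $r := y_2 - y_1 < 1$, take $p_1 = (0, y_1)$, $p_2 = (r \ln r, y_2)$, $p_3 = (r, y_1)$. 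The explicit formula for $D$ gives $D(p_1, p_2) = r = D(p_1, p_3)$, so $\eta$-quasisymmetry of $F$ forces $D(F(p_1), F(p_2)) \le \eta(1) D(F(p_1), F(p_3))$. By (1), $D(F(p_1), F(p_3)) = |H(r, y_1) - H(0, y_1)| \le C K_0^4 r$, and the formula for $D$ yields $D(F(p_1), F(p_2)) \ge |G(y_2) - G(y_1)|$. Chaining gives $|G(y_2) - G(y_1)| \le \eta(1) C K_0^4 r$, uniformly in $y_1$; the case $r \ge 1$ follows by iteration. The lower Lipschitz bound for $G$ is obtained by running the same argument for $F^{-1}$ (which is $\eta_1$-quasisymmetric).

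The hardest part will be matching the precise exponents $K_0^4$ and $K_0^5$ in the statement; while extracting some finite power of $K_0$ from the above is straightforward, pinning down the claimed bounds requires careful tracking of how the factors $\eta(1)$ and $\eta^{-1}(1)$ accumulate when $F$ and $F^{-1}$ are both invoked (and may use Lemma \ref{l1} to convert between $L_G, l_G$ and $L_H, l_H$ at the end). A secondary subtlety is the extension from a.e.\ $y$ to every $y$ in (1), which relies on continuity of $F$ together with density of the set where $G'$ exists.
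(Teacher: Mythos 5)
Your overall architecture coincides with the paper's: fix $y_0$ where $G'$ exists, set $C=G'(y_0)$, show $G'(y)$ is comparable to $C$ at every other point of differentiability by a quasisymmetric comparison that crosses horizontal lines, transfer to $H(\cdot,y)$ via Lemmas \ref{l1} and \ref{l2}, extend from a.e.\ $y$ to all $y$ by a limiting argument, and then deduce (2). The a.e.-to-everywhere step and your treatment of (2) are sound (in particular, once (1) is known, $(H(\cdot,y))^{-1}$ is automatically a quasisimilarity with constant $C^{-1}$, so invoking $F^{-1}$ for the lower bound on $G$ is legitimate; the paper instead gets (2) in one line from Lemma \ref{l1}).

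The substantive issue is the comparison step, and it is precisely where your constants come out too weak to match the statement. Writing $K_0=\eta(1)/\eta^{-1}(1)$, your symmetric triple argument at the single scale $d_y$ invokes the quasisymmetry once at $y_1$ and once at $y_2$, and each line contributes a factor $K_0$ from Lemma \ref{l2}, so it yields
\[
\frac{G'(y_1)}{G'(y_2)}\;\le\;\eta(1)^2 K_0^2 .
\]
Feeding this through Lemma \ref{l1}(2) gives $l_{H(\cdot,y)}(x)\ge C/\bigl(\eta(1)^3K_0^2\bigr)$, and $\eta(1)^3K_0^2\le K_0^4$ holds only when $\eta(1)\,(\eta^{-1}(1))^2\le 1$, which fails for, e.g., $\eta(t)=Kt^n$ with $K>1$ and $n\ge 3$. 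So no amount of bookkeeping will pin your version down to the exponent $4$; you prove the lemma with a strictly larger quasisimilarity parameter in general. The paper recovers the missing factor of $\eta(1)$ by an asymptotic variant of your comparison: it compares the sheared point $(x',y_0)$ with $x'=x+(y_0-y)\ln|y_0-y|$ against $(x,y)$ from the common basepoint $(x_0,y)$ (so that the two $D$-distances are exactly equal and quasisymmetry is applied only once, with ratio $1$), and then lets $|x-x_0|\to\infty$ so that the error terms $|H(x_0,y_0)-H(x_0,y)|$ and the logarithmic correction $\tau$ become negligible; this gives $G'(y)/G'(y_0)\le \eta(1)K_0^2$ and hence the claimed exponents $4$ and $5$. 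Since nothing downstream depends on the exact exponent, your argument establishes a serviceable version of the lemma, but not the lemma as stated.
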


\begin{proof}
 (1)   Fix any  $y_0\in \R$   such that $G'(y_0)$ exists
  %Fix any $y_0\in \R$  that satisfies the inequality in Lemma
%\ref{verticaldila}
        and set $C=G'(y_0)$. Let $y\in \R$
          be any point such that
            $G'(y)$ exists.  By
Lemma \ref{l2},  the map $H(\cdot, y):\R\ra\R$ is an
$\eta(1)/\eta^{-1}(1)$-quasisimilarity  with constant $G'(y)$.  %By
%Lemma \ref{l2}  a.e. $y\in \R$ has this property.
   Let  $x_0\in \R$ and
choose $x\in \R$ such that $|x-x_0|\ge |y-y_0|$. Let
$x'=x+(y_0-y)\ln|y_0-y|$.  %be a
%point far away from $x_0$.
Then
\[
   D((x',y_0),(x_0,y))=D((x,y),(x_0,y))=|x-x_0|.
\]
By picking $x$ so that in addition
$$\kappa:=\big\vert H(x',y_0)-H(x_0,y)-(G(y_0)-G(y))\ln |G(y_0)-G(y)|\big\vert >|G(y_0)-
G(y)|,$$
 by the $\eta$-quasisymmetry of $F$ we have
\begin{align*}
  \kappa =D(F(x', y_0), F(x_0, y))
    & \le \eta(1) D(F(x,y), F(x_0,y)) =\eta(1) |H(x, y)- H(x_0, y)|.
\end{align*}
By  Lemma \ref{l2}  and the choice of $y$, we have
   $$|H(x,y)-
H(x_0,y)|\le(\eta(1)/\eta^{-1}(1)) l_G(y)|x- x_0|.$$
    On the other
hand, letting  $\tau=(G(y_0)-G(y))\ln |G(y_0)-G(y)|$,  we have
\begin{align*}
 \kappa& \ge |H(x',y_0)- H(x_0,y_0)|- |H(x_0,y_0)- H(x_0,y)|-|\tau|\\
               & \ge\frac{G'(y_0)}{\eta(1)/\eta^{-1}(1)} |x'-x_0|-|H(x_0,y_0)- H(x_0,y)|-|\tau|.
\end{align*}
      Combining the above inequalities and letting $|x-x_0|\ra \infty$, we obtain
\[
G'(y)=l_G(y)\ge \frac{1}{(\eta(1))^3 (\eta^{-1}(1))^{-2}} G'(y_0)
   =\frac{C}{(\eta(1))^3 (\eta^{-1}(1))^{-2}}.
\]
 Switching the roles of $y$ and $y_0$ we obtain
\[
G'(y_0)\ge \frac{1}{(\eta(1))^3 (\eta^{-1}(1))^{-2}} G'(y).
\]
  Hence  $ \frac{C}{(\eta(1))^3 (\eta^{-1}(1))^{-2}}\le  G'(y)\le C(\eta(1))^3 (\eta^{-1}(1))^{-2}$.
 By  Lemma~\ref{l1},  for all $x\in \R$,
\[
  L_{H(\cdot, y)}(x)\le  L_G(y)/\eta^{-1}(1)\le C (\eta(1))^3
  (\eta^{-1}(1))^{-3}
\]
and
\[
   l_{H(\cdot, y)}(x)\ge \frac{1}{\eta(1)}l_G(y)\ge \frac{C}{(\eta(1))^4(\eta^{-1}(1))^{-2}}.
\]
  Hence  for   a.e. $y\in \R$,  the map
$H(\cdot,y)$  is an $(\eta(1)/\eta^{-1}(1))^4$-quasisimilarity with
constant $C$.   A limiting argument shows that this is true for all
$y$.

(2)  Statement  (1)   implies the following  for all $x, y\in \R$,
$$\frac{C}{(\eta(1)/\eta^{-1}(1))^4}\le  l_{H(\cdot, y)}(x)\le  L_{H(\cdot, y)}(x)\le C(\eta(1)/\eta^{-1}(1))^4.$$
  Now Lemma \ref{l1} implies
$$\frac{C}{(\eta(1)/\eta^{-1}(1))^5}\le  l_G(y)\le  L_G(y)\le C(\eta(1)/\eta^{-1}(1))^5$$
  for all $y\in \R$.  Hence (2) holds.

\end{proof}

\begin{Le}\label{l4}
   $F$ is an $(\eta(1)/\eta^{-1}(1))^6$-quasisimilarity with
constant $C$,    where $C$ is the constant in Lemma~\ref{l3}.
\end{Le}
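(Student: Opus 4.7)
The plan is to combine the uniform horizontal-line control from Lemma~\ref{l3}(1) with a single application of the $\eta$-quasisymmetric inequality at distance ratio $1$, using a carefully chosen horizontal reference point.

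Given any two distinct points $p_1 = (x_1, y_1)$ and $p_2 = (x_2, y_2)$, I would set $r = D(p_1, p_2)$ and introduce the horizontal reference point $p_3 = (x_1 + r, y_1)$. Because the restriction of $D$ to any horizontal line coincides with the Euclidean distance on that line, $D(p_1, p_3) = r = D(p_1, p_2)$. By Lemma~\ref{l3.4} the map $F$ sends horizontal lines to horizontal lines, so $F(p_1)$ and $F(p_3)$ lie on a common horizontal line, whence
\[
D(F(p_1), F(p_3)) = |H(x_1 + r, y_1) - H(x_1, y_1)|.
\]
Lemma~\ref{l3}(1) then pins this quantity into the interval $[Cr/K_4,\, K_4 C r]$, where $K_4 = (\eta(1)/\eta^{-1}(1))^4$.

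Next, applying the $\eta$-quasisymmetric inequality to the triple $(p_1, p_2, p_3)$ (distance ratio $1$) and then to the triple $(p_1, p_3, p_2)$ yields the two-sided comparison
\[
\frac{1}{\eta(1)}\,D(F(p_1), F(p_3)) \le D(F(p_1), F(p_2)) \le \eta(1)\,D(F(p_1), F(p_3)).
\]
Substituting the bounds on $D(F(p_1), F(p_3))$ produces
\[
\frac{C}{\eta(1)\,K_4}\,D(p_1, p_2) \;\le\; D(F(p_1), F(p_2)) \;\le\; \eta(1)\,K_4\,C\,D(p_1, p_2).
\]
Since $\eta^{-1}(1) \le 1 \le \eta(1)$ forces $(\eta^{-1}(1))^2 \le \eta(1)$, one checks that $\eta(1)\,K_4 \le (\eta(1)/\eta^{-1}(1))^6 = K_6$, which delivers the claimed $K_6$-quasisimilarity with constant $C$.

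The main obstacle here is conceptual rather than computational: all the genuine work has already been carried out in Lemma~\ref{l3}, which converts pointwise almost-everywhere information into a single uniform constant $C$ controlling $H(\cdot, y)$ for \emph{every} $y$. Once that uniformity is in hand, passing from a horizontal neighbor $p_3$ to an arbitrary neighbor $p_2$ at the same $D$-distance costs only a single factor of $\eta(1)$ (and its reciprocal), which is harmlessly absorbed into the sixth power. The one subtle point to verify is simply that Lemma~\ref{l3}(1) is available for \emph{all} $y$ (not just a.e.), but this has already been arranged by the limiting argument at the end of the proof of Lemma~\ref{l3}.
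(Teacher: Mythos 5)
Your argument is correct, but it takes a genuinely different route from the paper's. The paper splits into two cases according to which term of the maximum realizes $D(p_1,p_2)$: when the vertical term dominates it invokes Lemma~\ref{l3}(2) for $G$, and when the horizontal term dominates it slides $p_1$ along its horizontal line to the point $(x_1-(y_1-y_2)\ln|y_1-y_2|,\,y_2)$ at the \emph{same} $D$-distance and invokes Lemma~\ref{l3}(1); this yields only the lower bound, and the upper bound is then obtained by repeating the whole argument for $F^{-1}$ (which is an $\eta_1$-quasisymmetry whose associated maps are quasisimilarities with constant $C^{-1}$). Your version avoids both the case split and the passage to $F^{-1}$: by comparing $p_2$ against a horizontal reference point $p_3=(x_1+r,y_1)$ at distance exactly $r=D(p_1,p_2)$, the two-sided quasisymmetry inequality at ratio $1$ gives upper and lower bounds simultaneously, using only Lemma~\ref{l3}(1) and the fact (Lemma~\ref{l3.4}) that $F(p_1)$ and $F(p_3)$ lie on a common horizontal line where $D$ is Euclidean. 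The arithmetic checks out: $\eta(1)\,(\eta(1)/\eta^{-1}(1))^4\le(\eta(1)/\eta^{-1}(1))^6$ precisely because $(\eta^{-1}(1))^2\le 1\le\eta(1)$, so you land within the stated constant $(\eta(1)/\eta^{-1}(1))^6$ (in fact with a slightly better bound). The only point worth making explicit is the degenerate case $p_3=p_2$ (which can occur when $y_1=y_2$ and $x_2=x_1+r$), where the quasisymmetry inequality for distinct triples does not apply but the desired comparison holds trivially since $D(F(p_1),F(p_2))=D(F(p_1),F(p_3))$. What your approach buys is economy --- Lemma~\ref{l3}(2) and the inverse map are not needed here at all; what the paper's approach buys is that it tracks separately how the vertical and horizontal distortions enter, which is the structure reused in Section~\ref{chara}.
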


\begin{proof}  Set $K=(\eta(1)/\eta^{-1}(1))^5$.
Let $(x_1,y_1),(x_2,y_2)\in\R^2$. We shall first establish a lower
bound for    $D(F(x_1,y_1),F(x_2,y_2))$.
  %Set $K=(\eta(1)/\eta^{-1}(1))^4$.
    Set $\tau=x_1- x_2-(y_1-y_2)\ln |y_1-y_2|$.
If $|\tau|\le |y_1-y_2|$,
%where $C$ is the constant in Lemma ???.
then $D((x_1, y_1),(x_2,y_2))=|y_1- y_2|$ and by
  Lemma \ref{l3} (2),
\begin{align*}
 D(F(x_1,y_1), F(x_2, y_2))\ge |G(y_1)-G(y_2)|
     \ge \frac{C}{K} |y_1-y_2|
     =\frac{C}{K} D((x_1,y_1),(x_2,y_2)).
\end{align*}
 If $|\tau|>|y_1-y_2|$, then
\[
  D((x_1,y_1),(x_2,y_2))=|\tau|=D((x_1-(y_1-y_2)\ln
|y_1-y_2|,y_2),(x_2,y_2)),
\]
and since $F$ is an $\eta$-quasisymmetry, we have
\begin{align*}
 D(F(x_1,y_1),F(x_2,y_2)) &\ge\frac{1}{\eta(1)} D(F(x_1-(y_1-y_2)\ln
|y_1-y_2|, y_2),F(x_2, y_2))\\
         &=\frac{1}{\eta(1)} \big\vert H(x_1-(y_1-y_2)\ln
|y_1-y_2|,y_2)- H(x_2,y_2)\big\vert \\
         &\ge \frac{C}{\eta(1)K}\big \vert  x_1- x_2-(y_1-y_2)\ln
|y_1-y_2|\big\vert \\
         &=\frac{C}{\eta(1)K}D((x_1,y_1),(x_2,y_2)),
\end{align*}
with the second inequality following from  Lemma \ref{l3} (1). Hence
we have a lower bound for $D(F(x_1,y_1),F(x_2,y_2))$.

By Lemma \ref{l3} (2), $G^{-1}:\R\ra \R$ is a $K$-quasisimilarity
with constant $C^{-1}$. Similarly, Lemma \ref{l3} (1) implies that
for each $y\in \R$, $(H(\cdot,y))^{-1}$ is a $K$-quasisimilarity
with constant $C^{-1}$. Also recall that $F^{-1}$ is an
$\eta_1$-quasisymmetry and $F$ is an $\eta$-quasisymmetry. Now the
argument in the previous paragraph applied to $F^{-1}$ implies
\[
   D(F^{-1}(x_1, y_1), F^{-1}(x_2, y_2))\ge
         \frac{1}{CK\eta_1(1)}D((x_1,y_1),(x_2,y_2)).
\]
It follows that
\[
  D(F(x_1,y_1),F(x_2,y_2))\le CK\eta_1(1) D((x_1, y_1),(x_2,y_2))=CK/\eta^{-1}(1) D((x_1, y_1),(x_2,y_2))
\]
for  all $(x_1,y_1),(x_2,y_2)\in \R^2$. Hence we also obtain  an
upper bound for the quantity $D(F(x_1, y_1),F(x_2,y_2))$.
%By considering $F^{-1}$ one obtains an upper bound also.  Notice it
%follows from Lemma \ref{l3} that  $G^{-1}$  and $(H(\cdot, y))^{-1}$ are
%quasisimilarity with constant $C^{-1}$, where $C$ is the
%constant in Lemma \ref{l3}.
\end{proof}

\section{Characterization of quasisymmetric maps}\label{chara}

In this section we give a complete description of all self
quasisymmetric maps of $\p G_A$.
  %As an application, we also identify all the isometries and similarities of
  %the space $(\R^2, D)$.

\b{theorem}\label{Main}
  {A map $F: (\R^2, D)\ra (\R^2, D)$ is a quasisymmetric map
  if and only if it has the following form: $F(x,y)=(ax+c(y), ay+b)$
  for all $(x,y)\in \R^2$, where $a\not=0$,  $b$ are constants and
  $c: \R\ra \R$ is a Lipschitz map.}

  \end{theorem}

  Let $F:(\R^2, D)\ra (\R^2, D)$ be  a quasisymmetric map.
   From Section \ref{bilip}, we know  there is a  quasisymmetric map $G: \R\ra \R$, and for each
   $y\in \R$ there is a quasisymmetric map $H(\cdot,y): \R\ra \R$
   such that  $F(x,y)=(H(x,y), G(y))$ for all $(x,y)\in \R^2$.
     Then $G'(y)$ exists almost everywhere.  Similarly, for each $y\in \R$,
      the map $H(\cdot, y)$ has derivative   $H_x(x,y)$ for a.e. $x\in \R$.

  \b{Le}\label{c.l1}
  {Let $F:(\R^2, D)\ra (\R^2, D)$ be  a quasisymmetric map.
  Let $y\in \R$ be such that $G'(y)$ exists,  and $x\in \R$ such
  that  $H_x(x, y)$ exists at $x$.  Then
    $G'(y)=H_x(x,y)$.}

    \end{Le}

    \b{proof}
      Let $F:(\R^2, D)\ra (\R^2, D)$ be  an  $\eta$-quasisymmetric map.
By replacing $F$ with $T_{(-H(x,y), -G(y))}\circ F\circ T_{(x,y)}$,
we may assume $(x,y)=(H(x,y), G(y))=0$.  Here $T_{(x,y)}$ denotes
the  Euclidean translation by $(x,y)$.
  Lemma \ref{verticaldila} implies $G'(0)\not=0$.
By composing $F$ with a dilation $\lambda_t$ for a suitable $t$ we
may assume $G'(0)=1$ or $-1$.   If $G'(0)=-1$, we  further compose
$F$ with the rotation
 $\pi: \R^2\ra \R^2$, $\pi(x,y)=(-x,-y)$.  Hence we may assume
 $G'(0)=1$.
   Denote $\lambda=H_x(0,0)$.
   By Lemma \ref{l3} (2)  we have $\lambda\not=0$.
      We shall prove that $\lambda=1$.

Since  $\lambda_t$ is a similarity, the family of maps
$\{F^t:=\lambda_t\circ F\circ \lambda_{-t}| t\in \R\}$ consists of
$\eta$-quasisymmetric maps. Write $F^t(x,y)=(H^t(x,y),G^t(y))$.
  We notice that $H^t(x, 0)=e^t H(e^{-t} x, 0)$ and $G^t(y)=e^t
  G(e^{-t} y)$.  Since the derivative
   $H_x(0,0)$ exists,
   the maps $H^t(\cdot, 0): \R\ra \R$
  converge (as $t\ra \i$)
  in the pointed Gromov-Hausdorff distance towards the map
   $x \ra \lambda x$.   Similarly,  the maps $G^t: \R\ra \R$
   converge  (as $t\ra \i$)
   in the pointed Gromov-Hausdorff distance towards the
   map $y\ra y$.  The compactness property of quasisymmetric maps
     implies    that
     there is a sequences $t_i\ra \i$ such that
     $F^{t_i}$ converges in the pointed Gromov-Hausdorff distance
     towards an $\eta$-quasisymmetric map $\tilde F: (\R^2, D)\ra
     (\R^2, D)$.  If we write $\tilde F(x,y)=(\tilde H(x,y), \tilde
     G(y))$, then $\tilde G(y)=y$  and $\tilde H(x, 0)=\lambda x$.

By   Theorem \ref{main},   the map $\tilde F$  is $L$-biLipschitz
for some $L\ge 1$.
 Fix some $x\in \R$ and a positive integer $n\ge
1$.
  For $i=0, \cdots,  n$, let $(x_i, y_i)=(x-\frac{i}{n}\ln n,
  \frac{i}{n})$.   Then  $D((x_i, y_i), (x_{i+1}, y_{i+1}))=1/n$.
Hence
  $$|\tilde H(x_i, y_i)-\tilde H(x_{i+1}, y_{i+1})-\frac{1}{n}\ln
  n|\le D(\tilde F(x_i, y_i), \tilde F(x_{i+1},y_{i+1}))\le L\cdot
  \frac{1}{n}.$$ Adding  up all these inequalities for $i=0, \cdots,n-1$
    and using the  triangle inequality
  we obtain
\begin{equation}\label{e1}
|\tilde H(x_0, y_0)-\tilde H(x_n, y_n)-\ln n|\le L.
\end{equation}
      On the other hand, $D((x_n, y_n), (x-\ln n, 0))=1$  and hence
\b{equation}\label{e2}
      |\tilde H(x_n, y_n)- \tilde H(x-\ln
n,0)|\le D(\tilde F(x_n,y_n), \tilde F(x-\ln n, 0))\le L.
\end{equation}
 It follows from (\ref{e1})  and (\ref{e2})   that
 $|\tilde H(x_0, y_0)- \tilde H(x-\ln n,0)-\ln n|\le 2L$.
   Notice that $\tilde H(x_0, y_0)=\tilde H(x, 0)=\lambda x$  and
   $\tilde H(x-\ln n,0)=\lambda (x-\ln n)$. So we have
     $|(\lambda-1)\ln n|\le 2L$. Since this is true for all $n\ge
     1$, we must have $\lambda=1$.

    \end{proof}

\b{Le}\label{cl.2}
  {There exist    constants $a\not=0$  and   $b$
    and also a function $c:\R\ra \R$  such that\newline
  (1) $G(y)=ay+b$;\newline
  (2) $H(x,y)=ax+c(y)$ for all $(x,y)\in \R^2$.}

  \end{Le}

  \b{proof}
Let $y\in \R$ be any point where $G$ is differentiable.  By Lemma
\ref{c.l1}, the quasisymmetric map $H(\cdot, y): \R\ra \R$ a.e. has
derivative $G'(y)$.  It follows that $H(\cdot, y)$ is an affine map;
 to be more precise, there is a constant $c(y)$ depending only on $y$ such that
  $H(x,y)=G'(y) x+c(y)$ for all $x\in \R$.

  We claim that $G'(y_1)=G'(y_2)$ holds for any two points
  $y_1,y_2\in \R$ at which $G$ is differentiable.
   Set  $\tau=(y_2-y_1)\ln |y_2-y_1|$.
    Let $x>0$  and denote
     $p=(0, y_1)$, $q=(x, y_1)$,
      $p'=(\tau, y_2)$  and $q'=(x+\tau, y_2)$.
  One checks that $D(p,q)=D(p',q')=x$ and $D(p, p')=D(q,
  q')=|y_2-y_1|$.
    By Theorem \ref{main} $F$ is $L$-biLipschitz for some
     $L\ge 1$.     We have $D(F(p), F(p'))\le L|y_2-y_1|$ and $D(F(q),
    F(q'))\le L |y_2-y_1|$.  On the other hand, by the preceding
    paragraph, we have   $F(p)=(c(y_1),  G(y_1))$,
    $F(q)=(G'(y_1)x+c(y_1),  G(y_1))$  and
    $F(p')=(G'(y_2)\tau+c(y_2), G(y_2))$,
      $F(q')=(G'(y_2)(x+\tau)+c(y_2),  G(y_2))$.
        Set  $\tau'=(G(y_2)-G(y_1))\ln |G(y_2)-G(y_1)|$.
        Since
        $$|[G'(y_2)(x+\tau)+c(y_2)]-[G'(y_1)x+
    c(y_1)]-  \tau'|\le
        D(F(q), F(q'))\le L |y_2-y_1|$$  for all $x>0$, we must have
  $G'(y_1)=G'(y_2)$.

 Since $G$ is differentiable a.e.,
 it follows from the above claim that $G$ a.e.has constant
 derivative,  hence must be an affine map. That is, there are
 constants $a\not=0$, $b$ such that $G(y)=ay+b$ for all $y\in \R$.
 This proves (1).  Now (2) follows from (1) and the first paragraph.

  \end{proof}

\begin{proof}[\bf{Completing the proof of Theorem~\ref{Main}.}]
First suppose $F: (\R^2, D)\ra (\R^2, D)$ is a quasisymmetric map.
Then by Lemma \ref{cl.2}
  $F$ has the form $F(x,y)=(a x+ c(y), ay+b)$, where
 $a\not=0$,  $b$   are constants,  and $c: \R\ra \R$ is a function.
Now fix $y_1,y_2\in \R$. Let $\tau=(y_2-y_1)\ln |y_2-y_1|$ and
denote $p=(0, y_1)$, $q=(\tau, y_2)$.  One checks that
$D(p,q)=|y_2-y_1|$.
  By Theorem \ref{main}  $F$ is a $L$-biLipschitz map for some  $L\ge 1$.
    Hence
    $D(F(p), F(q))\le L D(p, q)=L|y_2-y_1|$.
     On the other hand,  $F(p)=(c(y_1), ay_1+b)$ and
     $F(q)=(a\tau+c(y_2),  ay_2+b)$. We have
\b{align*} D(F(p), F(q)) & \ge
     \big\vert (a\tau+c(y_2)-c(y_1))-a(y_2-y_1)\ln |a(y_2-y_1)|\big\vert\\
    & =\big\vert c(y_2)-c(y_1)-(a \ln |a|) (y_2-y_1)\big\vert.
     \end{align*}
     Now the triangle
     inequality implies $|c(y_2)-c(y_1)|\le \big(L+\big\vert a\ln
     |a|\big\vert\big)|y_2-y_1|$, that is, $c$ is $\big(L+\big\vert a\ln
     |a|\big\vert\big)$-Lipschitz.

     Conversely,    suppose $F$ has the form
$F(x,y)=(a x+ c(y), ay+b)$, where
 $a\not=0$,  $b$   are constants,  and $c: \R\ra \R$ is  $L$-Lipschitz.
One checks by direct calculation that $F$ is Lipschitz, as follows.
  Let $p=(x,y), q=(x', y')\in \R^2$  be two arbitrary points.  Then
  $F(p)=(ax+c(y), ay+b)$ and $F(q)=(ax'+c(y'), ay'+b)$.
  Set  $\tau=(x'-x)-(y'-y)\ln |y'-y|$.
  We have
   $D(p, q)=\max\{|y'-y|, |\tau|\}$
     and
     $$D(F(p), F(q))=\max\big\{\big\vert a(y'-y)\big\vert,  \big\vert a\tau+[c(y')-c(y)]-(a\ln |a|)(y'-y)\big\vert\big\}.$$
       Now  $|a(y'-y)|=|a|\cdot |y'-y|\le |a|D(p,q)$
         and
\b{align*} \big\vert a\tau+[c(y')-c(y)]-(a\ln |a|)(y'-y)\big\vert
&\le \big\vert a\tau\big\vert +
\big\vert c(y')-c(y)\big\vert +\big\vert (a\ln |a|)(y'-y)\big\vert\\
 &   \le |a|
     D(p,q)+L|y'-y|+\big\vert a\ln |a|\big\vert\cdot |y'-y|\\
     & \le \big(|a|+L+\big\vert a\ln |a|\big\vert\big) D(p,
     q).
     \end{align*}
       It follows that $F$ is Lipschitz with Lipschitz constant
        $|a|+L+\big\vert a\ln |a|\big\vert.$
            On the other hand, $F^{-1}$ has the form
            $$F^{-1}(x, y)=\left(\frac{1}{a}\cdot x -
            \frac{1}{a}\cdot
            c\left(\frac{1}{a}y-\frac{b}{a}\right), \; \, \frac{1}{a}\cdot y-\frac{b}{a}\right).$$
              As a composition of Lipschitz maps,  the map $c':\R\ra \R$,
              $c'(y)=-\frac{1}{a}\,c(\frac{1}{a}y-\frac{b}{a})$ is also
              Lipschitz.  Hence the above calculation shows that
              $F^{-1}$ is also Lipschitz.

\end{proof}

\section{A Liouville type theorem for $(\R^2, D)$}\label{s6}

In this section we prove a
  Liouville type theorem for $(\R^2, D)$,   which says  that
     all conformal
  maps of $(\R^2, D)$ are boundary maps of isometries of
     $G_A$. We first  identify all the conformal maps of $(\R^2,
     D)$.

Using Theorem \ref{Main}, we can identify all the isometries and
similarities of $(\R^2, D)$.
 Recall that  the map $\pi$ and  similarities
$\lambda_t$ are defined in Proposition \ref{iso.and.simi}.

\b{Prop}\label{iso.and} {(1) The group of all isometries
      of $(\R^2,
D)$ is generated by Euclidean translations and $\pi$;\newline
 (2) The group of all  similarities of $(\R^2, D)$ is generated by
  Euclidean translations, $\pi$ and the similarities $\lambda_t$
  ($t\in \R$).

}\end{Prop}

\b{proof} We only prove (2), the proof of (1) being similar. Let $F:
(\R^2, D)\ra (\R^2, D)$ be a similarity. By composing   $F$  with
 a suitable $\lambda_t$, we  may assume $F$ is an isometry.
  By Theorem \ref{Main}, $F$ has the form
   $F(x,y)=(ax+c(y), ay+b)$,   where $a\not=0$,  $b$ are constants and
  $c: \R\ra \R$ is a Lipschitz map.
  By considering the restriction of $F$ on a horizontal
    line $\R\times \{y\}$, we see $a=1$ or $-1$.  By composing with
    $\pi$ if necessary (when $a=-1$), we may assume $a=1$.  By
    further composing $F$ with an   Euclidean translation, we may
    assume $b=0$ and $c(0)=0$.  Now $F$ has the form
    $F(x,y)=(x+c(y), y)$ for all $(x, y)\in \R^2$, where $c(0)=0$.
    We claim $c(y)=0$ for all $y\in \R$. Suppose $c(y)\not=0$ for
    some $y\not=0$.  Let $\epsilon=1$ or $-1$ be such that
     $\epsilon y$ and $c(y)$ are either both positive or both
     negative.   Let $p=(0,0)$ and  $q=(\epsilon y+y \ln |y|, y)$.
       Then $F(p)=p$ and $F(q)=(\epsilon y+y \ln |y|+c(y), y)$.
         One
       calculates   $D(F(p), F(q))=|\epsilon
       y+c(y)|>|y|=D(p,q)$, contradicting
          the fact that $F$ is an
       isometry. Hence $c(y)=0$ for all $y$ and $F$ is the identity
       map.

\end{proof}

Let  $X$ and $Y$  be quasimetric spaces with finite Hausdorff
dimension. Denote by $H_X$ and $H_Y$  their Hausdorff dimensions and
by $\mathcal{H}_X$  and $\mathcal{H}_Y$  their Hausdorff measures
(see
  \cite{F}   for definitions).  We say  a quasisymmetric map $f: X\ra Y$ is
  conformal if:\newline
  (1) $L_f(x)=l_f(x)\in (0, \i)$  for $\mathcal{H}_X$-almost every
  $x\in X$;\newline
  (2) $L_{f^{-1}}(y)=l_{f^{-1}}(y)\in (0, \i)$ for
  $\mathcal{H}_Y$-almost every $y\in Y$.

\b{Le}\label{liu}
  {Every conformal map $F: (\R^2, D)\ra (\R^2, D)$  is a
  similarity.
  }

  \end{Le}

  \b{proof}
Since $F$ is conformal, it  is quasisymmetric in particular.  By
Theorem \ref{intromain}, $F$ has the following form:  $F(x,y)=(ax+
c(y), ay+b)$, where $a\not=0$,  $b$ are constants and
  $c: \R\ra \R$ is a Lipschitz map.
    By composing $F$ with a similarity, we may assume
    $a=1$  and  $b=0$; that is, $F$ has the form
      $F(x,y)=(x+ c(y), y)$.  We shall prove that $c(y)$ is a
      constant function. % by showing that $c'(y)=0$ a.e.

      Since $c:  \R\ra \R$   is a Lipschitz function, it is differentiable
      a.e.  We shall show that $c'(y)=0$ for a.e. $y\in \R$.
         By the definition of a conformal map,
           $L_F(x,y)=l_F(x,y)$ for a.e. $(x,y)\in \R^2$ with respect
           to the  Lebesgue   measure in $\R^2$.   It follows from
           Fubini's theorem that for  a.e. $y\in \R$,  the
           derivative $c'(y)$ exists and
            $L_F(x,y)=l_F(x,y)$ for a.e. $x\in \R$.
              Let $y_0$ be   an arbitrary  such  point  and $x_0\in \R$ be such that
              $L_F(x_0,y_0)=l_F(x_0,y_0)$. We will show
              $c'(y_0)=0$.

By pre-composing and post-composing with Euclidean translations if
necessary,
  we may assume that  $(x_0, y_0)=(0, 0)$ and $c(y_0)=0$. We need to show
  $c'(0)=0$.
We will suppose $c'(0)\not=0$ and get a contradiction.
   Notice that $F(x,0)=(x,0)$ for all  $x\in \R$.
   It follows that $L_F(0,0)\ge 1$ and $l_F(0,0)\le 1$.
     Combining this with the assumption
$L_F(0,0)=l_F(0,0)$, we obtain  $L_F(0,0)=l_F(0,0)=1$. First suppose
$c'(0)>0$.  Then $c(y)>0$ for sufficiently small $y>0$.
 Let $p=(0,0)$ and $q=(r+r\ln r, r)$ with $r>0$.
  Then $F(p)=p$ and $F(q)=(r+r\ln r+c(r), r)$.
   One calculates  $D(p,q)=r$
    and $D(F(p), F(q))=r+c(r)$.
 It follows that $L_F(p,r)\ge r+c(r)$ and hence
  $L_F(p)\ge 1+c'(0)>1$,  contradicting  $L_F(0,0)=1$.
If $c'(0)<0$, then letting $q=(-r+r\ln r, r)$ one similarly obtains
a contradiction.

  \end{proof}

  \b{theorem}\label{liouville}
  {Let  $F: (\R^2,D)\ra (\R^2, D)$  be  a conformal map.  Then
    $F$ is the boundary map of some
     isometry $f:  G_A\ra G_A$.

  }

  \end{theorem}

  \b{proof}
By Lemma \ref{liu},  $F$ is a similarity. By  Proposition
\ref{iso.and} (2),  $F$ is the composition of Euclidean
translations, $\tau$ and similarities $\lambda_t$.
  Now the theorem follows from the following facts (see the end of Section \ref{metriconb}):  (1)  Euclidean translations
  of $\R^2$ are boundary maps of the Lie group left translations $L_g$
  for elements of the form $g=((x,y),0)\in G_A$;  (2) $\tau$ is the
  boundary map of the isometry $\tau': G_A\ra G_A$;
    (3) $\lambda_t$ is the boundary map of the Lie group left translation
     $L_g$ for $g=((0,0), t)$.

  \end{proof}

\section{Quasiisometries of $G_A$}\label{s7}

In this section we calculate the  quasiisometry group of $G_A$ and
 identify all the quasiisometries of $G_A$  up to
bounded distance. From this it is easy to see that all
quasiisometries of $G_A$ are almost isometries and are
height-respecting.

We first discuss the  structure of the group $QS(\R^2,D)$ of all
quasisymmetric maps of $(\R^2,D)$.
 We identify three subgroups of   $QS(\R^2,D)$.
  Let
  $H_1=\{\lambda_t: t\in \R\}\cong \R$. %  be the group consisting of  boundary maps $\p L_g$ ($g\in G_A$), where $L_g$
   %is  the Lie group left translation by $g$.
  Let $H_2=<\tau>\cong Z_2=\{\bar 0, \bar 1\}$ be the order 2 cyclic group generated by $\tau$.
  Let  $H_3$ be the group of homeomorphisms of $\R^2$ of the form
   $F_{C,b}(x,y)=(x+C(y), y+b)$, where $b\in \R$ and $C:\R\ra \R$  is a Lipschitz
   function.
     %   satisfying $c(0)=0$.
     %Notice that $H_1$  is isomorphic to $G_A$, $H_2$ is isomorphic
     %to $Z_2$,  and $H_3$ is   isomorphic to the additive  group   $L_0$  of
     %Lipschitz functions $c:\R\ra \R$ satisfying $c(0)=0$.
  %Recall that,  the boundary map $\p L_g$ of $L_g$ for
 %$g=((x', y'), t')\in G_A$  was denoted by $T_g$ in
%Section \ref{metriconb}, and has the formula:
  %we assign the similarity
 %$T_{g'}$ of  $(\R^2, D)$ given by:
 %  $T_{g}(x,y)=(x'+e^{t'}(x+t'y), y'+e^{t'}y)$.
  %The assignment $g\ra \p L_g$ defines
   %   an isomorphism between  $G_A$  and $H_1$.
%It is clear that the three subgroups $H_1$, $H_2$ and $H_3$ have
%pairwise  trivial intersections.
Direct calculations show that
  $H_1$ and $H_2$ commute,
%$H_2$ normalizes $H_1$
 both $H_1$
and $H_2$ normalize $H_3$,
  and $H_3\cap <H_1, H_2>$ is trivial.
  On the other hand,  Theorem \ref{Main}  implies that  $QS(\R^2, D)$ is generated by $H_1$, $H_2$
  and $H_3$.
  It follows that we have the following isomorphism:
  $$QS(\R^2, D)\cong H_3 \rtimes (H_1\oplus H_2).$$

Let  $L$  be the additive group  consisting of
     Lipschitz functions $C:\R\ra \R$.
       Let $\R$ act on $L$ by
         $b* C=C\circ T_b$, for $b\in \R$ and  $C\in L$,
         %Lipschitz function
         %$C:\R\ra \R$,
         where $T_b$ is the translation on $\R$ by
         $b$.  Then it is easy to check that the map
given  by $F_{C,b}\mapsto (C, b)$ defines an isomorphism from
         the group $H_3$
 to the opposite group
   $\overline{L\rtimes \R}$
         of $L\rtimes \R$. %Recall that the group operation on the opposite group
         %$\overline G$ of a group $G$ is given by:
          % $g_1*g_2=g_2g_1$.
It now follows that we have the following isomorphism:
  $$QS(\R^2, D)\cong (\overline{L\rtimes \R}) \rtimes (\R\times Z_2).$$
  Here  the action of $\R\times \{\bar 0\}$ on $\overline{L\rtimes \R}$
  is given by
  $(t,\bar 0)*(C,b)=(C', b')$ for $(t,\bar 0)\in \R\times \{\bar 0\}$ and $(C,
  b)\in \overline{L\rtimes \R}$, where
  $$C'(y)=e^t\cdot C(e^{-t} y)+bte^t\;\;
  \text{and}\;\;  b'=e^t b;$$
    and the action of $\{0\}\times Z_2$ on
    $\overline{L\rtimes \R}$
  is given by  $(0, \bar 1)*(C, b)=(C'', -b)$, where
  $C''(y)=-C(-y)$.

 % $$\cong L_0 \rtimes (G_A\rtimes Z_2).$$
%Here the action of $Z_2$ (we denote by $\bar 1$  the nontrivial
%element of $Z_2$)   on $G_A$ is given by
%$$\bar 1\cdot ((x,y), t)=((-x, -y), t);$$
 %the action of $Z_2$ on $L_0$ is given by
 %$\bar 1\cdot c=-c$;
  %and the action of $G_A$ on $L_0$ is given by
  %$$((x,y), t)\cdot c=e^{t} c.$$

 Two quasiisometries $f,g: X\ra Y$ between two metric spaces are
 said to be
 equivalent  if   $\sup\{d(f(x), g(x)):  x\in X\}<\i$.
For any metric space $X$,
 the  quasiisometry group $QI(X)$   consists of equivalence classes
 of quasiisometries $X\ra X$  and has group operation given by
 composition.

For each quasiisometry $f: G_A\ra G_A$, let $\p f: \p G_A\ra \p G_A$
be its boundary map.

   \b{theorem}\label{qigroup}
   We have $QI(G_A)\cong QS(\R^2, D)\cong
(\overline{L\rtimes \R}) \rtimes (\R\times Z_2)$,   where $L$ and
the various actions   are as described above.

   \end{theorem}

\b{proof}
  By Proposition \ref{p3.4}, each quasiisometry
   $f: G_A\ra G_A$ induces a quasisymmetric map
   $\p f: (\R^2, D)\ra (\R^2, D)$.
 Notice that an quasiisometry  $f$  of $G_A$ is at finite
distance from the identity map of $G_A$ if and only if the boundary
map
 of $f$  is the identity map  on $(\R^2, D)$.
   Hence the map $\p_1: QI(G_A)\ra QS(\R^2,D)$, $\p_1([f])=\p f$ is
well-defined and is injective. Here $[f]$ denotes the equivalence of
$f$.  On the other hand,  by \cite{BS} each element in $QS(\R^2,D)$
is the boundary map of an quasiisometry.  Hence $\p_1: QI(G_A)\ra
QS(\R^2,d)$   is also surjective.

\end{proof}

We now  identify a group  of quasiisometries of $G_A$ that is
isomorphic to $QI(G_A)$.  Let $H'_1=\{L_g: g=((0,0), t) \in G_A\}$.
 Let $H'_2=<\tau'>$, where $\tau'((x,y), t)=((-x,-y), t)$ is the
  automorphism of $G_A$ defined in    Section \ref{metriconb}.
     The two groups $H_1$ and $H_2$ consist of isometries of $G_A$.
    Let $H'_3$ be the group of homeomorphisms of $G_A$ of the
    following form:
    $$f_{C,b}: G_A\ra G_A,  \;\;\;  f_{C,b}((x,y), t)=((x+C(y), y+b), t),$$
   where  $b\in \R$ and  $C\in L$.  It is clear that $H'_3$ is isomorphic to
   $H_3\cong \overline{L\rtimes \R}$.
Using    Lemma  6.3  in \cite{SX}  and the fact that $F_{C,b}:
(\R^2, D_e)\ra (\R^2, D_e)$ is biLipschitz, it is easy to check that
each $f_{C,b}$ ($b\in \R$, $C\in L$) is an almost isometry of $G_A$.
In particular,
 $H_3$ consists of quasiisometries of $G_A$.

  Let $QI'(G_A)$ be the group of homeomorphisms
   of  $G_A$
  generated by $H'_1$,
  $H'_2$ and $H'_3$.
A similar discussion as above shows that
 $$QI'(G_A)\cong H'_3 \rtimes (H'_1\times  H'_2)\cong
\overline{L\rtimes \R}  \rtimes (\R\times Z_2),$$
  where the various actions are as described above.
 Let $\p: QI'(G_A)\ra QS(\R^2, D)$ be the map that assigns  to  each
 $f\in QI'(G_A)$ its boundary map.
  It is easy to see that $\p $
 maps $H'_i$ ($i=1,2,3$)   isomorphically onto $H_i$.  It follows
 that $\p $ is an isomorphism.  Let
$p:  QI'(G_A)\ra QI(G_A)$ be the group homomorphism that assigns  to
  each $f\in QI'(G_A)$ its equivalence class.
  Since $\p=\p_1\circ p$ (where $\p_1$ is defined in the proof of Theorem \ref{qigroup}),
   it follows from
  Theorem \ref{qigroup} that $p$  is an isomorphism.
  We obtain:

  \b{theorem}\label{qasiisog}
  {Every quasiisometry $f: G_A\ra G_A$ is at a finite distance from
    exactly  one  element of $QI'(G_A)$.

  }

  \end{theorem}

Now we can provide a proof of Corollary
   \ref{c0}.

\begin{proof}[\bf{Proof of Corollary
   \ref{c0}.}]
    Since each element in $H'_1$ and $H'_2$ is an isometry of $G_A$,
   and every  element of $H'_3$ is an almost isometry,
  we see that every element of $QI'(G_A)$ is an almost isometry.
    Now
   Corollary
   \ref{c0}
   follows from this and Theorem \ref{qasiisog}.

\end{proof}

Under    the identification of $G_A$ with $\R^2\times \R$,  we view
the map $h:\R^2\times \R\ra \R$, $h((x,y),t)=t$ as the height
function.
A quasiisometry $f:G_A\ra G_A$  is height-respecting if %it permutes the
%level set of $h$ up to a bounded distance.
$|h(f((x,y),t))-t|$ is bounded independent of $((x,y), t)\in G_A$.
Since every element of $H'_1$, $H'_2$ and $H'_3$ is
height-respecting,  we have

\begin{Cor}\label{c2}
%Let $G_A$ be as above.
All self quasiisometries of $G_A$ are height-respecting.
\end{Cor}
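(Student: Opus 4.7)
The plan is to combine Theorem \ref{qasiisog} with the observation that the height function $h((x,y),t)=t$ is $1$-Lipschitz on $G_A$. First I would verify this Lipschitz property directly from the description of the Riemannian metric: at each point the metric is block-diagonal with the $\R^2$-factor carrying $Q_A(t)$ and the $\R$-factor carrying $1$, so the projection onto the $t$-coordinate is $1$-Lipschitz. In particular, two maps of $G_A$ to itself that are at bounded distance differ by a bounded amount in the height function.

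Next I would check that every generator of $QI'(G_A)$ is height-respecting. For each $L_g \in H_1'$ with $g=((0,0),s)$ the left translation sends $((x,y),t)$ to a point with height $t+s$, so $|h(L_g(p))-h(p)|=|s|$ is bounded. The automorphism $\tau'\in H_2'$ preserves $t$ exactly, and each $f_{C,b}\in H_3'$ also preserves $t$ exactly by its very definition. Thus each generator is height-respecting with a uniform bound, and since $QI'(G_A)$ is generated by $H_1'\cup H_2'\cup H_3'$ (and height-respecting is closed under composition, with the bounds adding), every element of $QI'(G_A)$ is height-respecting.

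Finally I would invoke Theorem \ref{qasiisog}: any self quasiisometry $f$ of $G_A$ is at finite distance from some $g\in QI'(G_A)$. Writing $K=\sup_{p\in G_A} d(f(p),g(p))<\infty$, the $1$-Lipschitz property of $h$ gives $|h(f(p))-h(g(p))|\le K$ for every $p\in G_A$. Combining with the bound $|h(g(p))-h(p)|\le M$ from the previous paragraph yields $|h(f((x,y),t))-t|\le K+M$ uniformly in $((x,y),t)$, which is precisely the definition of $f$ being height-respecting. There is no real obstacle here; the main point is simply that the paper has already done the hard work of classifying quasiisometries up to bounded distance in Theorem \ref{qasiisog}, and Corollary \ref{c2} is an immediate structural consequence once one notes that each factor in the explicit model $QI'(G_A)$ acts in an evidently height-respecting way.
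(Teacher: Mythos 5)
Your proof is correct and follows essentially the same route as the paper: the paper also deduces Corollary \ref{c2} from Theorem \ref{qasiisog} together with the observation that every element of $H'_1$, $H'_2$ and $H'_3$ (hence of $QI'(G_A)$) is height-respecting. Your additional verification that $h$ is $1$-Lipschitz, so that bounded distance in $G_A$ forces bounded difference in height, is exactly the implicit step the paper leaves to the reader.
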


 \addcontentsline{toc}{subsection}{References}

\noindent Address:

\noindent Xiangdong Xie: Dept. of Mathematical Sciences, Georgia
Southern University, Statesboro, GA 30460, U.S.A.\hskip .4cm E-mail:
xxie@georgiasouthern.edu


\begin{thebibliography}{99}







\bibitem[BP]{BP}
M. Bourdon,    H. Pajot,
  \e{ Rigidity of quasi-isometries for some
hyperbolic buildings. Comment. Math. Helv. {\bf{75}} (2000), no. 4,
701--736.}

\bibitem[BS]{BS}
M. Bonk, O. Schramm,
 \e{Embeddings of Gromov hyperbolic spaces, }
Geom. Funct. Anal. {\bf{10}} (2000), no. 2, 266--306.


\bibitem[F]{F}
   H.  Federer,
 \e{Geometric measure theory, }
 Grundlehren der Mathematik {\bf{153}}.  Springer Verlag, Berlin-New
 York-Heidelberg,  1969.


\bibitem[Hn]{Hn}   J.  Heinonen,
 \e{Lectures on analysis on metric spaces,}
     Universitext.
Springer-Verlag, New York, 2001.




\bibitem[K]{K} B. Kleiner,
\e{Unpublished notes.}

\bibitem[P1]{P1}  P. Pansu,
\e{Dimension conforme et sphere a l'infini des varietes a courbure
negative,}
   Ann. Acad. Sci. Fenn. Ser. A I Math. {\bf{14}} (1989), no. 2, 177--212.

\bibitem[P2]{P2}  P. Pansu,
\e{Metriques de Carnot-Caratheodory et quasiisometries des espaces
symetriques de rang un,}
  Ann. of Math. (2) {\bf{129}}
(1989), no. 1, 1--60.




\bibitem[SX]{SX}   N.  Shanmugalingam, X. Xie,
\e{A Rigidity  Property of Some Negatively Curved Solvable Lie
Groups,}   preprint.










\bibitem[T]{T}  J. Tyson,
\e{Metric and geometric quasiconformality in Ahlfors regular Loewner
spaces,}    Conform. Geom. Dyn.  {\bf{5}}  (2001), 21--73.

\bibitem[V]{V}{J. V\"ais\"al\"a,}
   \e{The free quasiworld. Freely quasiconformal and related
     maps in Banach spaces,}  {Quasiconformal geometry and dynamics,
     Banach Center Publ. (Lublin)}  {\bf{48}}  {(1996),} {55--118}.


\bibitem[X]{X}  X. Xie,
  \e{Quasi-isometric rigidity of Fuchsian buildings}. Topology  {\bf{45}} (2006),
no. 1, 101--169.

\end{thebibliography}
\end{document}